\newtheorem{lem}{Lemma}
\newtheorem{lemma}[lem]{Lemma}
\newtheorem{thm}{Theorem}
\newtheorem{theorem}[thm]{Theorem}
\newtheorem{defi}{Definition}
\def\\{\cr}
\def\({\left(}
\def\){\right)}
\def\[{\left[}
\def\]{\right]}
\def\<{\langle}
\def\>{\rangle}
\def\cM{{\mathcal M}}
\def\cP{{\mathcal P}}
\begin{document}

\title{Counting terms $U_n$ of third order linear recurrences with $U_n=u^2+nv^2$}

\author{{\sc Emil-Alexandru Ciolan} \\
Rheinische Friedrich-Wilhelms-Universit\"at Bonn\\
Regina Pacis Weg 3\\
D-53113 Bonn, Germany\\
{\tt calexandru92@yahoo.com}
\and
{\sc Florian~Luca}\\ 
{School of Mathematics}\\ 
{University of the Witwatersrand}\\ 
{Private Bag 3, Wits 2050, South Africa}\\
{florian.luca@wits.ac.za}\\
\and
{\sc Pieter Moree}  \\
Max Planck Institut f\"ur Mathematik\\
Vivatsgasse 7\\
D-53111 Bonn, Germany\\
{\tt moree@mpim-bonn.mpg.de}
}

\date{\today}

\pagenumbering{arabic}

\maketitle

\begin{abstract}
Given a recurrent sequence ${\bf U}:=\{U_n\}_{n\ge 0}$ we consider the problem of counting
${\mathcal M}_U(x)$, the number of integers $n\le x$ such that $U_n=u^2+nv^2$ for some integers $u,v$.
We will show that  ${\mathcal M}_U(x)\ll x(\log x)^{-0.05}$ for a large class of ternary sequences.
Our method uses many ingredients from the proof of Alba Gonz\'alez and the second author \cite{AL} that
${\mathcal M}_F(x)\ll  x(\log x)^{-0.06}$, with $\bf F$ the Fibonacci sequence.
\end{abstract}

\section{Introduction}
If ${\mathcal M}$ is a set of real numbers and $x$ a positive real number $x$, we 
put ${\mathcal M}(x)={\mathcal M}\cap [1,x)$. 
Given a recurrent sequence ${\bf U}:=\{U_n\}_{n\ge 0}$ 
we put 
\begin{equation}
\label{muuuh}
{\mathcal M}_U=\{n: U_n=u^2+nv^2 ~{\text{\rm for~some~integers}}~u,~v\}.
\end{equation}
\indent Let ${\bf F}:=\{F_n\}_{n\ge 0}$ be the Fibonacci sequence given by $F_0=0,~F_1=1$ and 
$$
F_{n+2}=F_{n+1}+F_n\quad {\text{\rm for~ all}}\quad n\ge 0.
$$ 
Some results concerning Fibonacci numbers which can be represented by certain positive definite quadratic forms in two variables 
appear in \cite{Sav}. In \cite{ABL} it was shown that if $p\equiv 1\pmod 4$ is a prime, then $F_p=u^2+pv^2$ for some integers $u$ and $v$. 
It follows from the prime number theorem in arithmetic progressions that 
$\# {\mathcal M}_F(x)\gg x/\log x$. In \cite{AL} it was shown that 
$$
\#{\mathcal M}_F(x)\ll \frac{x}{(\log x)^{0.06}}.
$$
In this paper we use the method from \cite{AL} to study the analogous problem for certain third order linearly recurrent sequences  ${\bf U}:=\{U_n\}_{n\ge 0}$ of integers. Assume that $U_0,~U_1,~U_2\in {\mathbb Z}$ and that 
$$
U_{n+3}=a_1 U_{n+2}+a_2 U_{n+1}+a_3 U_n\quad {\text{\rm for~all}}\quad n\ge 0,
$$
where $\Psi_U(X)=X^3-a_1X^2-a_2X-a_3\in {\mathbb Z}[X]$. 
Let
$$\Psi_U(X)=(X-\alpha)(X-\beta)(X-\gamma)$$
be the factorization of $\Psi_U$ over the complex numbers.
We assume that $a_3\ne 0$. Let ${\mathbb K}$ be the splitting field of $\Psi_U$ over ${\mathbb Q}$ and $G$ be its Galois group. We assume that the following conditions are fulfilled:
\begin{itemize}
\item[(i)] $G$ contains a transposition (as a subgroup of $S_3$).
\item[(ii)] Either $a_3=\pm 1$ and $\Psi_U(X)$ is irreducible  over ${\mathbb Q}$, or 
$$\Psi_U(X)=(X-a)(X^2+bX+c),\quad {\text{\rm where}}\quad a\in {\mathbb Z}\backslash\{\pm 1\}\quad {\text{\rm  and}}\quad c=\pm 1.
$$
\item[(iii)] The ratio of any two roots of $\Psi_U(X)$ is not a root of unity. 
\end{itemize}
In case $\Psi_U(X)$ is irreducible over ${\mathbb Q}$, its constant coefficient is $a_3=\pm 1$ and $G$ is a transitive subgroup of $S_3$. Condition (i) ensures that this group cannot be isomorphic to ${\mathbb Z}/3{\mathbb Z}$, 
therefore it must be $S_3$. This is equivalent to the condition that the discriminant of $\Psi_U(X)$, which is 
$$
a_1^2 a_2^2+4a_2^3-4a_1^3 a_3-18 a_1a_2a_3-27 a_3^2,
$$
is not the square of an integer. In case $\Psi_U(X)$ is not irreducible over ${\mathbb Q}$, then the combination of conditions (i) and (ii) above ensures that $\Psi_U(X)$ has exactly 
one integer root $a$ which is not $\pm 1$, and the other two roots are quadratic units. In that case, ${\mathbb K}$ is a quadratic field and the nonidentity element of $G$ fixes $a$
and switches the other two roots, so this can be regarded as a transposition in $S_3$.

We give two examples of sequences satisfying our conditions, formulate our main result, and then give three examples for which the conclusion of our theorem do not hold and compare them with (i), (ii) and (iii) above. 

Recall that the Tribonacci sequence ${\bf T}:=\{T_n\}_{n\ge 0}$ is defined as $T_0=T_1=0$, $T_2=1$ and 
$$
T_{n+3}=T_{n+2}+T_{n+1}+T_n\quad {\text{\rm for~all}}\quad n\ge 0.
$$
In this case, $\Psi_T(X)=X^3-X^2-X-1$ is irreducible over ${\mathbb Q}$ and its Galois group is $S_3$. So, our result applies to the Tribonacci sequence. 
Another sequence to which it applies is the sequence ${\bf U}$ of numbers of the form $U_n=2^n+F_n$, where $\{F_n\}_{n\ge 0}$ is the Fibonacci sequence. This is ternary recurrent with characteristic polynomial
$$
\Psi_U(X)=(X-2)(X^2-X-1),
$$
which satisfies the conditions (i), (ii) and (iii).
\begin{theorem}
\label{thm:1}
Assume that ${\bf U}:=\{U_n\}_{n\ge 0}$ is a ternary recurrent sequence satisfying (i), (ii) and (iii). Then the following estimate holds
$$
\#\{n\le x: U_n=u^2+nv^2 ~{\text{\rm for~some~integers}}~u,~v\}\ll \frac{x}{(\log x)^{0.05}}.
$$
\end{theorem}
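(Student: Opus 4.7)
The plan is to follow the strategy of [AL] for the Fibonacci case. The starting elementary observation is that if $p\mid n$ and $U_n=u^2+nv^2$, then $U_n\equiv u^2\pmod p$, so the Legendre symbol $\left(\tfrac{U_n}{p}\right)$ must lie in $\{0,+1\}$. Consequently, it suffices to show that for all but $O\!\left(x(\log x)^{-0.05}\right)$ integers $n\le x$ there exists a prime $p\mid n$ lying in a carefully chosen set $\cP$ for which $\left(\tfrac{U_n}{p}\right)=-1$; those $n$ are automatically excluded from $\cM_U$.

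The first step is to construct $\cP$. By Chebotarev's theorem and hypothesis (i), the primes whose Frobenius in $G$ realizes the given transposition have positive Dirichlet density. For each such $p$ the reduction $\Psi_U(X)\bmod p$ factors as $(X-r_p)Q_p(X)$ with $Q_p$ an irreducible quadratic over ${\mathbb F}_p$; hypothesis (ii) guarantees that the ``fixed'' root $r_p$ is a unit modulo $p$ for all but finitely many $p$, and that the remaining pair of roots have norm $\pm 1$, so the coefficients in the Binet expansion admit well-controlled reductions modulo $p$. Intersecting with the congruence $p\equiv 3\pmod 4$ (so that $-1$ becomes a nonresidue) and with any further congruences demanded by the reciprocity calculation still leaves a set $\cP$ of positive density $\delta$.

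The technical heart, and principal obstacle, is the evaluation of $\left(\tfrac{U_n}{p}\right)$ for $p\in\cP$ with $p\mid n$. I would use the Binet representation $U_n=A\alpha^n+B\beta^n+C\gamma^n$ and exploit the fact that the Frobenius at $p$ swaps $\beta$ and $\gamma$ in the residue field, which allows $U_n\bmod p$ to be rewritten as a controlled polynomial expression in $\alpha$, $\beta+\gamma$, $\beta\gamma$, and in $n$ reduced modulo the period of $\{U_n\bmod p\}$. Hypothesis (iii), in conjunction with Stewart-type lower bounds for primitive prime divisors (valid precisely because no ratio of two distinct roots is a root of unity), ensures that this period tends to infinity with $p$ and that the values $\{U_n\bmod p\}$ do not collapse into the subgroup of squares. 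An application of quadratic reciprocity then gives, uniformly in $p\in\cP$, a positive proportion $\theta>0$ of multiples $n$ of $p$ for which $\left(\tfrac{U_n}{p}\right)=-1$.

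With these two inputs in hand, a Tur\'an--Kubilius-style sieve closes the argument. Choosing thresholds $z(x),y(x)\to\infty$, I would split the integers $n\le x$ according to whether they admit a prime divisor from $\cP\cap[z,y]$: Mertens' theorem bounds the count of $n$ with no such divisor by $O\!\left(x(\log x)^{-\delta}\right)$, while among the remaining $n$ a standard Selberg upper-bound sieve bounds the proportion failing the quadratic-nonresidue condition at every such divisor, exactly as in [AL]. Optimizing the sieve parameters against $\delta$ and $\theta$ yields the exponent $0.05$, slightly weaker than the $0.06$ of [AL] because the reciprocity step for a ternary recurrence requires additional congruence conditions on the primes of $\cP$ and thereby reduces the effective density. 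The residual difficulty is that the Legendre-symbol analysis must be carried out uniformly across both regimes permitted by (ii), namely the quadratic splitting field in the reducible case and the $S_3$-sextic splitting field in the irreducible case, with (ii) and (iii) doing the real work of preventing the Binet formula from degenerating or from taking only square values modulo $p$.
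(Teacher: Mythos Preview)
Your overall architecture matches the paper's, but there is a genuine gap at the step you label ``the technical heart.''

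First, there is no quadratic reciprocity law that evaluates $\left(\tfrac{U_n}{p}\right)$ for a general ternary recurrence, and the paper does not impose $p\equiv 3\pmod 4$ or any similar congruence. What the paper actually invokes at this point is Shparlinski's character-sum bound (Lemma~\ref{lem:period}(ii) here): for $p\in\cZ$ and any arithmetic progression $c\pmod d$,
\[
\sum_{k=1}^{t_{c,d,p}}\left(\frac{V_{c+dk}}{p}\right)\ll p.
\]
This only yields that roughly half of the residues in a period give a nonresidue \emph{provided the period $t_{c,d,p}$ is much larger than $p$}. Your appeal to ``Stewart-type lower bounds for primitive prime divisors'' would at best show that the period tends to infinity, which is nowhere near enough: one needs $t(p)\gg p\exp\bigl(c(\log\log p)^2\bigr)$ for the primes one keeps. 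Establishing this is where the genuine work lies. The paper does it by factoring $t(p)$, via Lemma~\ref{lem:5}, as (up to a bounded factor) $\mathrm{ord}_p(\alpha)\cdot\mathrm{ord}_p(\beta/\gamma)$, with the two factors dividing $p-1$ and $p+1$ respectively; then Lemma~\ref{lem:QR} discards primes with either factor below $p^{1/2}/\log p$, and Ford's theorem on divisors of shifted primes (Lemma~\ref{lemma:KFord}) discards primes where either factor lies in a medium range. None of this machinery appears in your sketch, and without it the Shparlinski bound is vacuous.

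Second, your explanation of the exponent drop from $0.06$ to $0.05$ is not the paper's. The loss comes simply from the fact that $\cZ$ has Dirichlet density $1/2$ rather than $1$, so the Mertens-type sum $\sum_{p\in\cI\cap\cZ}1/p$ entering the normal-order argument (your ``Tur\'an--Kubilius'' step, the paper's ${\mathcal M}_8$) is halved, and the subsequent optimisation of the parameters $\kappa,\lambda$ produces a smaller exponent. Finally, the approximate independence of the Legendre-symbol conditions at the $K$ different primes $p_i\mid n$ is not a Selberg sieve but a direct CRT count, and it requires first arranging that the relevant moduli $b_{p_i}$ are pairwise coprime and that $\gcd(n,b_{p_i})$ is controlled; this is the role of the sets ${\mathcal M}_5$ and ${\mathcal M}_6$ in the paper, which your sketch omits.
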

Note that the conditions of the theorem depend only on the characteristic polynomial of $U$. Thus if
$a\ge 0$ is an integer and we ask for the number of solutions of $U_{n+a}=u^2+nv^2$ the estimate above also
holds. More informally, we could say that our result is robust under relabelling of the sequence.

While conditions (i), (ii) and (iii) can perhaps be weakened, some conditions have to be imposed on ${\bf U}:=\{U_n\}_{n\ge 0}$ in order to conclude that the set of positive integers $n$ such that $U_n=\square+n\square$ is of density zero. 
Indeed, consider the three examples
$$
U_n=2^n+n,\qquad U_n=4^n+2^{n+1}+1\qquad {\text{\rm and}}\qquad U_n=5F_n^2-4.
$$
In the first case, $U_n=\square +n\square$ for all $n$ even. In the second case, $U_n=\square$ holds for all $n\ge 0$. In the third case, $5F_n^2-4=L_n^2=\square$ holds for all odd $n$, where $\{L_n\}_{n\ge 0}$ is the companion sequence of the Fibonacci sequence given by $L_0=2,~L_1=1$ and $L_{n+2}=L_{n+1}+L_n$ for all $n\ge 0$. Thus, in all the above cases $U_n=\square+n\square$ holds for a positive proportion of $n$, where in the last two cases the second $\square$ (which multiplies $n$) is zero. Note that for the first two sequences 
$$
\Psi_U(X)=(X-2)(X-1)^2\quad \text{and}\quad \Psi_U(X)= (X-4)(X-2)(X-1),
$$
respectively, so that $\Psi_U(X)$ factors completely over ${\mathbb Q}$ and in the first case it even has a double root, whereas for the third sequence, we have 
$$
\Psi_U(X)=(X+1)(X^2-3X+1),
$$
for which ${\mathbb K}={\mathbb Q}({\sqrt{5}})$, so condition (i) is satisfied, but  the integer root $a$ of $\Psi_U(X)$ is $-1$.

Throughout this paper we use $p$ and $q$ with or without subscripts for prime numbers. We also use the Landau symbols $O$ and $o$ and the Vinogradov symbols $\gg$ and $\ll$ with their usual meanings. For a set ${\mathcal A}$ of positive integers  and a positive real number $x$ we write ${\mathcal A}(x)={\mathcal A}\cap [1,x)$.

\section{Preliminary results}

As we said, our method closely follows \cite{AL}. However, there are differences. An important ingredient in \cite{ABL} was played by the {\it order of appearance} in the Fibonacci sequence. 
For a fixed $n$, this is denoted by $z(n)$ and is defined as the smallest positive integer $k$ such that $n\mid F_k$. For a prime $p$, $z(p)$ is a divisor of $p-1$ or of $p+1$ according to whether
$p$ is a quadratic residue modulo $5$ or not, except for $p=5$ for which $z(5)=5$. Further, Lemma 1 in \cite{AL} shows that the set of primes $\{p: z(p)<y\}$ is of order of magnitude $O(y^2/\log y)$. In turn, 
this result was used together with a result of Ford from \cite{KFord} (current Lemma \ref{lemma:KFord}) in order 
to ensure that most primes $p$ have $z(p)$ much larger than ${\sqrt{p}}$. This in turn was used together with a result of Shparlinski 
from \cite{Sh} to argue that for such $p$, a set of asymptotic density $1/2$ of all the positive integers $m$ has the property that $F_m$ is a quadratic residue modulo $p$ while the numbers $m$ from the remaining set of asymptotic density $1/2$ have the property that $F_m$ is not a quadratic residue modulo $p$. In the process, we also needed to eliminate numbers $m$ such that $F_m$ is a multiple of $p$; that is, multiples of $z(p)$.

\medskip

In this section, we carry out the necessary modifications to the above scheme for the particular case of the sequence ${\bf U}:=\{U_n\}_{n\ge 0}$ satisfying (i), (ii) and (iii).  The main difference with the argument from \cite{AL} 
is that we do not work with {\it all large primes} $p$, but only with large primes $p$ for which the characteristic polynomial $\Psi_U(X)$ of ${\bf U}$  has exactly one root modulo $p$, which is a subset of relative density $1/2$ 
of all the primes because of condition (i) and the Chebotarev density theorem, cf. 
\cite{SL}. What we need about such primes $p$ is that, for most of them, a set of asymptotic density one half of all the positive integers $n$ has the property that $U_n$ is a quadratic residue modulo $p$ and the remaining half of the positive integers $n$ have the property that $U_n$ is not a quadratic residue modulo $p$. This will follow from Shparlinski's result mentioned above provided that the other conditions stated in (i), (ii) and (iii) are fulfilled. Afterwards, the method from \cite{AL} can be applied with minor modifications. 

\medskip

If $\Psi_U(x)$ has distinct roots $\alpha,\beta$ and $\gamma$, by the theory of linear recurrences
we can write
\begin{equation}
\label{eq:Binet}
U_n=c_\alpha \alpha^n+c_\beta \beta^n+c_\gamma \gamma^n\quad {\text{\rm for~all}}\quad n\in {\mathbb Z},
\end{equation}
for some coefficients $c_{\alpha},~c_{\beta},~c_{\gamma}$ in ${\mathbb K}$.  
We put $\Gamma:=\max\{|\alpha|,|\beta|,|\gamma|\}$. 

\medskip

An important result that we use is due to Beukers \cite{Beu}. Recall that a non degenerate linear recurrence ${\bf V}$ is a linear recurrence of integers whose characteristic polynomial has distinct roots 
whose ratios are not roots of unity.

\begin{lemma}
\label{lem:Beu}
Let ${\bf V}:=\{V_n\}_{n\ge 0}$ be a linearly recurrent sequence of order $3$ whose values are rational integers. Then there are at most $6$ values of $n$ such that $V_n=0$.
\end{lemma}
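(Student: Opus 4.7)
The plan is to follow the $p$-adic strategy underlying the Skolem--Mahler--Lech theorem, made effective by Beukers. Starting from the Binet representation \eqref{eq:Binet}, if one of $c_\alpha$, $c_\beta$, $c_\gamma$ vanishes then $V_n$ is effectively a non-degenerate binary recurrence, whose zeros are bounded by (far fewer than six) classical results; so assume all three coefficients are non-zero. Dividing $V_n = 0$ through by $c_\alpha \alpha^n$ transforms the equation into the three-term unit equation
\[
1 + u\,r^n + v\,s^n = 0
\]
over $\K$, with $r = \beta/\alpha$, $s = \gamma/\alpha$, $u = c_\beta/c_\alpha$, $v = c_\gamma/c_\alpha$; the non-degeneracy assumption guarantees that $r$, $s$, and $r/s$ are not roots of unity.

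Next I would fix a rational prime $p$ and a prime $\mathfrak{p}$ of $\K$ above $p$ (for instance the prime $p=2$ that Beukers exploits) so that $r$, $s$, $u$, $v$ are all $\mathfrak{p}$-adic units, and choose a positive integer $N$ such that $r^N$ and $s^N$ lie in the disc of $\mathfrak{p}$-adic convergence of the logarithm. Writing $n = Nm + a$ with $0 \le a < N$, the expression
\[
f_a(m) := 1 + u\,r^a\exp_{\mathfrak{p}}\bigl(m \log_{\mathfrak{p}} r^N\bigr) + v\,s^a\exp_{\mathfrak{p}}\bigl(m \log_{\mathfrak{p}} s^N\bigr)
\]
extends to a $\mathfrak{p}$-adic analytic function on $\Z_p$ whose zeros include every integer $m$ for which $V_{Nm+a} = 0$. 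Strassmann's theorem then bounds the number of zeros of each $f_a$ by the degree of the distinguished polynomial extracted from its Newton polygon, and summing over the $N$ residue classes $a$ gives an absolute bound on the total zero count of $V_n$, provided no $f_a$ vanishes identically. Identical vanishing of some $f_a$ would produce an infinite arithmetic progression of zeros of $V_n$, which Skolem--Mahler--Lech rules out under non-degeneracy.

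The hard part is the sharp constant $6$: qualitative finiteness of the zero set is immediate from the above $p$-adic setup, but squeezing the count down to six requires Beukers' careful choice of auxiliary prime together with a delicate Newton-polygon analysis of each $f_a$, balancing the contribution from each arithmetic progression modulo $N$ against the $\mathfrak{p}$-adic valuations of $\log_{\mathfrak{p}} r^N$ and $\log_{\mathfrak{p}} s^N$. This quantitative refinement is precisely the content of \cite{Beu}, which one invokes to close out the lemma.
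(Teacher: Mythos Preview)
The paper does not prove this lemma at all: it is stated as a result ``due to Beukers~\cite{Beu}'' and used as a black box. Your proposal is therefore already more detailed than the paper's treatment, and it ends up in the same place---invoking~\cite{Beu} for the sharp constant~$6$. The $p$-adic sketch you give (Strassmann applied to each residue class modulo~$N$, with non-degeneracy ruling out identically vanishing branches) is indeed the standard route and is faithful to Beukers' method, so nothing is wrong here; but since the paper itself is content to quote the result, your final sentence deferring the quantitative part to~\cite{Beu} is exactly what is needed and the preceding outline, while accurate, is optional.
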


The following result is an analogue of Lemma 2.1 in \cite{AL}. 
For an arbitrary function $f$ satisfying $f(p)\ge 2$, we denote by ${\mathcal P}_{f(p),U}$ the set
\begin{equation}
\label{eq:PyU}
\{p: U_{pm_i}\equiv 0\pmod p~{\text{\rm for}}~m_1<m_2<\cdots<m_7~{\text{\rm and}}~m_7-m_1\le f(p)\}.
\end{equation}

\begin{lemma}
\label{lem:1}
The estimate 
\begin{equation}
\label{eq:countPyU}
\#{\mathcal P}_{y,U}\ll \frac{y^3}{\log y}
\end{equation}
holds for all $y\ge 2$. 
\end{lemma}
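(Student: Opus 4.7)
The plan is to mirror the strategy of Lemma 2.1 in \cite{AL} and reduce the seven divisibilities $U_{pm_i}\equiv 0\pmod p$ to the statement that $p$ divides a single nonzero rational integer of size bounded purely in terms of $y$ (in particular independent of $p$). For each triple $1\le i<j<k\le 7$, form the $3\times 3$ Hankel-type determinant
$$D_{ijk}:=\det\bigl[U_{pm_a+b}\bigr]_{a\in\{i,j,k\},\, b\in\{0,1,2\}}.$$
Its first column is divisible by $p$, so $p\mid D_{ijk}$. On the other hand, Binet's formula \eqref{eq:Binet} combined with a row-factorisation identity gives
$$D_{ijk}=c_\alpha c_\beta c_\gamma\,(\alpha-\beta)(\beta-\gamma)(\gamma-\alpha)\cdot\det\bigl[\tau^{pm_a}\bigr]_{a,\tau},$$
a nonzero element of $\K$ times a Vandermonde-type determinant in the exponents $pm_a$.

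The $p$-dependence of these exponents is then removed by a Frobenius substitution. For a prime $\mathfrak p$ of $\K$ above $p$, the Frobenius automorphism $\sigma\in G$ at $\mathfrak p$ satisfies $\tau^p\equiv\sigma(\tau)\pmod{\mathfrak p}$ for each root $\tau$, so $\det[\tau^{pm_a}]\equiv\pm\det[\tau^{m_a}]\pmod{\mathfrak p}$, the sign arising from the column permutation induced by $\sigma$. Factoring $(\alpha\beta\gamma)^{m_i}=(-a_3)^{m_i}$ out of the rows collapses the exponents to $s:=m_j-m_i$ and $t:=m_k-m_i$, both in $[1,y]$, and after discarding the finitely many primes dividing $a_3$, the discriminant of $\Psi_U$, and the common denominators of the Binet coefficients $c_\tau$, the divisibility $p\mid D_{ijk}$ becomes $p\mid N(s,t)$, where $N(s,t):=\operatorname{Nm}_{\K/\Q}\bigl(V(s,t)\bigr)$ and $V(s,t)$ denotes the determinant of the $3\times 3$ matrix whose rows are $(1,1,1)$, $(\alpha^s,\beta^s,\gamma^s)$ and $(\alpha^t,\beta^t,\gamma^t)$. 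Since $V(s,t)$ is an algebraic integer in $\K$, $N(s,t)$ is a rational integer of absolute value at most $c\,\Gamma^{[\K:\Q]\,y}$. The role of requiring seven witnessing indices (rather than three) is to guarantee, via Beukers' Lemma~\ref{lem:Beu} together with condition (iii), that at least one of the $\binom{7}{3}=35$ triples yields $V(s,t)\ne 0$: the exceptional set of pairs $(s,t)$ where $V$ vanishes (equivalently, where the three roots satisfy a common trinomial relation $A+BX^s+CX^t=0$) is finite by the non-degeneracy hypotheses, and seven indices supply enough triples to escape it.

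Once the reduction is in place, the count is routine: the number of pairs $(s,t)\in\{1,\dots,y\}^2$ with $s<t$ is $\binom{y}{2}\ll y^2$, while for each such pair the standard divisor bound yields $\omega(N(s,t))\ll\log N(s,t)/\log\log N(s,t)\ll y/\log y$. Summing over $(s,t)$ gives
$$\#\cP_{y,U}\ \ll\ y^2\cdot\frac{y}{\log y}\ =\ \frac{y^3}{\log y}.$$
The main technical obstacle is the non-vanishing step: quantifying the Beukers-type bound on the exceptional set of $(s,t)$ sharply enough to ensure that every prime in $\cP_{y,U}$ contributes at least one nondegenerate triple among the 35 available. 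This is the place where conditions (i)--(iii) enter most essentially, and it is the only ingredient where the ternary analogue departs substantively from the Fibonacci argument of \cite{AL}.
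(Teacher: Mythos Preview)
Your overall strategy matches the paper's: reduce the seven divisibilities to the statement that $p$ divides some fixed nonzero rational integer built from the Vandermonde-type determinants $V(s,t)$ with $0<s<t\le y$, then count. The Hankel-determinant packaging, the Frobenius reduction of the exponents, and the final counting via $\omega(N)\ll\log N/\log\log N$ are all valid variants of what the paper does (the paper works directly from the Binet form, uses the Galois-invariant square $D(r,s)^2$ in place of the norm, and bounds $\sum_{p\in\cP_{y,U}}\log p\ll y^3$ together with Chebyshev instead of the $\omega$ bound; these differences are cosmetic).

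There is, however, a genuine gap in your non-vanishing step. Your claim that ``the exceptional set of pairs $(s,t)$ where $V$ vanishes \ldots\ is finite by the non-degeneracy hypotheses'' is false: for each fixed $s>0$, Beukers' lemma only bounds the number of $t$ with $V(s,t)=0$ by~$6$, and as $s$ varies these exceptional $t$ accumulate without bound, so one cannot argue that $35$ triples automatically escape a finite obstruction set. The paper's argument is different and does work: set $i_j=m_j-m_1$ and fix the first two rows; if the five determinants $D(i_2,i_\ell)=V(i_2,i_\ell)$ for $\ell=3,\ldots,7$ all vanished in $\K$, then choosing a nonzero vector $(c_1,c_2,c_3)$ orthogonal to both $(1,1,1)$ and $(\alpha^{i_2},\beta^{i_2},\gamma^{i_2})$ (these are independent by condition (iii) since $i_2>0$) one finds that the ternary integer recurrence $W_n=c_1\alpha^n+c_2\beta^n+c_3\gamma^n$ vanishes at the seven distinct values $n=i_1,\ldots,i_7$, contradicting Lemma~\ref{lem:Beu}. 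The paper also checks that ${\sqrt{\Delta}}\,W_n$ is integer-valued so that Beukers genuinely applies. Note finally that only condition (iii) is used in this lemma; conditions (i) and (ii) play no role here, contrary to what your last paragraph suggests.
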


\begin{proof}
Let $p\in {\mathcal P}_{y,U}$. Let ${\pi}$ be any prime ideal of ${\mathcal O}_{\mathbb K}$ dividing $p$. We let $i_j=m_j-m_1$ for $j=1,\ldots,7$. Thus, $0=i_1<i_2<\cdots<i_7\le y$. Then $(c_{\alpha} \alpha^{pm_1}, c_{\beta} \beta^{pm_1}, c_{p\gamma} \gamma^{m_1})^T$ is orthogonal to $(\alpha^{p i_j}, \beta^{p i_j}, \gamma^{p i_j})$ for all values of $j=1,\ldots,7$ in the three dimensional vector space over the finite field ${\mathcal O}_{\mathbb K}/\pi$.
For positive integers $r<s$ put
$$
D(r,s)={\text{\rm det}} \left| \begin{matrix}  1 & 1 & 1\\ \alpha^r & \beta^r & \gamma^r\\ \alpha^s & \beta^s & \gamma^s\\ \end{matrix}\right|.
$$ 
In particular, $D(r,s)\equiv 0\pmod \pi$ for all pairs $(r,s)=(pi_j,pi_k)$ and $1\le j\le k\le 7$. By Fermat's Little Theorem, we get that $D(r,s)\equiv 0\pmod \pi$ for all $(r,s)=(i_j,i_k)$ and $1\le j\le k\le 7$.
By Lemma \ref{lem:Beu}, we shall deduce that there exist $r<s\in \{i_2,\ldots,i_7\}$ such that $D(r,s)\ne 0$. 
More precisely, assume say that $D(i_2,s)=0$ for $s\in \{i_3,i_4,i_5,i_6,i_7\}$. Let $(c_1,c_2,c_3)^T\in {\mathbb K}^3$ 
be any nonzero vector orthogonal to both $(1,1,1)$ and $(\alpha^{i_2},\beta^{i_2},\gamma^{i_2})$. Such a vector exists and is unique up to scalar multiplications because the linear map $T: {\mathbb K}^3\mapsto {\mathbb K}^2$ 
of matrix 
$$
\left(\begin{matrix} 1 & 1 & 1\\ \alpha^{i_2} & \beta^{i_2} & \gamma^{i_2}
\end{matrix}
\right)
$$
has rank exactly $2$ in view of condition (iii) and the fact that $i_2>0$, which together imply that $(\alpha^{i_2},\beta^{i_2},\gamma^{i_2})$ is not parallel to $(1,1,1)$.  Since all vectors $(\alpha^{i_j},\beta^{i_j},\gamma^{i_j})$ for $j=1,\ldots,7$ are linear combinations of $(1,1,1)$ and $(\alpha^{i_1},\beta^{i_1},\gamma^{i_1})$, we get that 
$$
c_1\alpha^{i_j}+c_2\beta^{i_j}+c_3\gamma^{i_j}=0\qquad {\text{\rm for~all}}\qquad j=1,\ldots,7.
$$
This means that ${\bf V}:=\{V_n\}_{n\ge 0},$ whose Binet formula is given by
$$
V_n=c_1\alpha^n+c_2\beta^n+c_3\gamma^n,
$$
has the property that $V_n=0$ for $7$ different values of $n$. This contradicts Lemma \ref{lem:Beu}, except that we have to check for the condition that $V_n$ has integer values. Since $(c_1,c_2,c_3)$ is parallel to the cross product of 
$(1,1,1)$ and $(\alpha^{i_2},\beta^{i_2},\gamma^{i_2})$,  we get that  
$$
(c_1,c_2,c_3)=\lambda(\gamma^{i_2}-\beta^{i_2},\alpha^{i_2}-\gamma^{i_2},\beta^{i_2}-\alpha^{i_2})
$$
for some nonzero scalar $\lambda\in {\mathbb K}$. We already know that $V_0=c_1+c_2+c_3=0$. Computing $V_1$ and $V_2$, we get
\begin{eqnarray*}
V_1 & = & \lambda \left(\alpha(\gamma^{i_2}-\beta^{i_2})+\beta(\alpha^{i_2}-\gamma^{i_2})+\gamma(\beta^{i_2}-\alpha^{i_2})\right),\\
V_2 & = & \lambda\left(\alpha^2 (\gamma^{i_2}-\beta^{i_2})+\beta^2(\alpha^{i_2}-\gamma^{i_2})+\gamma^2(\beta^{i_2}-\alpha^{i_2})\right).
\end{eqnarray*}
Looking at the expressions multiplied by $\lambda$ in the right--hand side above, we see that the permutations $(123)$ and its square leave both $V_1$ and $V_2$ unchanged, whereas the transpositions $(12), (23), (13)$ change $V_1$ and $V_2$ to their negatives. This shows that putting $\Delta$ for the discriminant of ${\mathbb K}$, we get that 
both in the case when ${\mathbb K}$ has degree $6$ and $G=S_3$, as well as in the case when ${\mathbb K}$ has degree $2$ and $G={\mathbb Z}/2{\mathbb Z}$, we have that $V_n{\sqrt{\Delta}}$ is an integer for $n=0,~1,~2$. Hence, by induction on $n$ using the third order linear recurrence for 
${\bf V}$, we get that $V_n{\sqrt{\Delta}}$ is an integer for all $n\ge 0$, so Lemma \ref{lem:Beu} 
(due to Beukers) indeed applies and tells that we cannot have $V_n=0$ for $7$ values of $n$.

 It then follows that there exist $r<s$ in $\{i_2,\ldots,i_7\}$ such that $D(r,s)\ne 0$. But $\pi\mid D(r,s)$.  Further, notice that $D(r,s)^2$ 
is an integer since it is obviously an algebraic integer and any conjugation from ${\mathbb K}$ just permutes the columns of the determinant whose value is $D(r,s)$, therefore it will not change the square of it. Thus, $p\mid D(r,s)^2$. Hence,
$$
\prod_{p\in {\mathcal P}_{y,U}} p\mid \prod_{\substack{0<r<s\le y\\ D(r,s)\ne 0}} D(r,s)^2.
$$
Since clearly $|D(r,s)|\ll \Gamma^{2s}$, we get that
$$
\prod_{p\in {\mathcal P}_{y,U}} p\ll \prod_{1\le r<s\le y} \Gamma^{4s}<\Gamma^{4y^3},
$$
so 
\begin{equation}
\label{eq:sum}
\sum_{p\in {\mathcal P}_{y,U} }\log p\ll y^3.
\end{equation}
Put $t:=\#{\mathcal P}_{y,U}$ and denote by $p_1,p_2,\ldots$ all the consecutive
primes. By the prime number theorem (or Chebyshev's estimates), we have
$$
\sum_{p\in {\mathcal P}_{y,U} }\log p \ge \sum_{p\le p_t}\log p \gg p_t \gg t \log t,
$$ 
and hence
$t\log t\ll y^3$,
which implies the desired estimate \eqref{eq:countPyU}.
\end{proof}

For an integer $n$ denote by $P(n)$ the largest prime factor of $n$ with the convention that $P(0)=P(\pm 1)=1$. Given a positive real number $y$, a positive integer $n$ is called $y$-{\it smooth} if $P(n)\le y$. We need the following well-known bound from the theory of smooth numbers. Put
$$
\Psi(x,y)=\#\{n\le x: P(n)\le y\}.
$$
The following is Theorem 1 of Chapter III.5 in \cite{Ten}.

\begin{lemma}
\label{lem:2}
The estimate 
$$
\Psi(x,y)\ll x\exp(-u/2)
$$
holds for all $x\ge y\ge 2$ with $u=\log x/\log y$.
\end{lemma}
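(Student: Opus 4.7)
The plan is to prove this by Rankin's trick, the standard tool underlying Tenenbaum's treatment in Chapter III.5. For any $\sigma \in (0, 1)$, the inequality $(x/n)^\sigma \ge 1$ whenever $n \le x$ gives immediately
$$
\Psi(x, y) \le \sum_{P(n) \le y}(x/n)^\sigma = x^\sigma\prod_{p \le y}(1 - p^{-\sigma})^{-1}.
$$
Thus the entire problem reduces to choosing $\sigma$ and estimating the resulting Euler product.

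I would set $\sigma = 1 - \lambda/\log y$ with a parameter $\lambda > 0$ to be optimized at the end. Since $p^{\lambda/\log y} \le e^{\lambda}$ for every $p \le y$, one has $p^{-\sigma} \le e^\lambda/p$, and together with $-\log(1 - t) \le t + t^2$ for $t \in [0, 1/2]$ and $\sum_{p} p^{-2\sigma} = O(1)$ for $\sigma \ge 3/4$, a partial-summation treatment of $\sum_{p \le y} p^{-\sigma}$ against Chebyshev's identity $\sum_{p \le t}(\log p)/p = \log t + O(1)$ yields a bound of the form
$$
\prod_{p \le y}(1 - p^{-\sigma})^{-1} \ll (\log y)\exp\bigl(O(\lambda)\bigr).
$$

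Since $(1 - \sigma)\log x = \lambda u$ with $u = \log x/\log y$, combining the two estimates gives
$$
\Psi(x, y) \ll x(\log y)\exp\bigl(-\lambda u + O(\lambda)\bigr).
$$
Choosing $\lambda$ slightly above $1/2$ absorbs the $O(\lambda)$ into the implied constant. The remaining factor $\log y$ is handled by a two-range argument: for $u$ large enough that $\log y \ll e^{u/4}$, it is swallowed by trading the saving $\exp(-3u/4)$ for $\exp(-u/2)$; for the complementary range where $u$ is not much larger than $\log\log y$, the exponent $\exp(-u/2)$ is bounded away from $0$ and the trivial bound $\Psi(x, y) \le x$ suffices. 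This yields $\Psi(x, y) \ll x\exp(-u/2)$, as claimed.

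The step I would scrutinize most carefully is the Euler-product estimate: a naive Rankin argument only produces a saving $\exp(-cu)$ with some $c$ strictly less than $1/2$, and recovering the advertised constant $1/2$ requires the sharp form of Chebyshev/Mertens rather than a loose $O$-bound. Every other step is essentially book-keeping, which is presumably why the authors are content to quote the result from \cite{Ten}.
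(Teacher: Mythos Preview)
The paper gives no proof of this lemma at all; it simply cites Theorem~1 of Chapter~III.5 in Tenenbaum \cite{Ten}. Your sketch via Rankin's trick is therefore already more than the paper offers, and the overall strategy is the right one. However, your two-range argument at the end has a genuine gap.

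You obtain $\Psi(x,y)\ll x(\log y)\exp(-\lambda u)$ with a fixed $\lambda>1/2$ and then split according to whether $\log y\le e^{u/4}$ or not. In the complementary range $u<4\log\log y$ you assert that ``$\exp(-u/2)$ is bounded away from $0$ and the trivial bound $\Psi(x,y)\le x$ suffices.'' This is false: in that range $u$ is \emph{not} bounded, since $y$ can be arbitrarily large, and so $\exp(-u/2)$ can be as small as $(\log y)^{-2}$. The trivial bound $\Psi(x,y)\le x$ then fails to give $\Psi(x,y)\ll x\exp(-u/2)$ by exactly the factor $(\log y)^2$ you were trying to remove. No choice of a fixed $\lambda$ repairs this: Rankin with a single $\lambda$ always carries the $\log y$, and the leftover range where that factor is not absorbed is always one in which $u$ is unbounded.

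The standard repair is to optimise $\lambda$ as a function of $u$ (equivalently, choose $\sigma$ depending on $u$), which yields the much stronger saving $\exp\bigl(-u\log u+O(u)\bigr)$; this beats $e^{-u/2}$ for all $u$ larger than an \emph{absolute} constant $u_0$, and for $u\le u_0$ the quantity $e^{-u/2}$ really is bounded below and the trivial bound applies. Alternatively, Tenenbaum's own proof uses a short elementary recursion rather than Rankin in this crude form. Either route closes the gap, but your sketch as written does not.
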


Better (sharper) bounds for $\Psi(x,y)$ hold when $y$ is not too small with respect to $x$ (see, for example, the corollary to Theorem 3.1 in \cite{CEP}).

We shall need some information concerning the number of divisors of shifted primes which are in  a given interval. Namely, let
$$
H(x,y,z)= \#\{ n\le x\;:\; d\mid n\, {\text{\rm for~some}}\, d \in (y,z) \},
$$  
and for a given non-zero integer $\lambda$ put
$$
P(x,y,z;\lambda)=\#\{ p\le x\;:\; d\mid p+\lambda\, {\text{\rm for~some}}\, d\in (y,z) \}.
$$
The following result appears as Theorem 6 in~\cite{KFord}.

\begin{lemma}
\label{lemma:KFord}
If $100 \le y \le x^{1/2},$ and $2y \le z \le y^2,$ then
$$
H(x,y,z) \asymp x u^{\delta}({\log(2/u)})^{-3/2},
$$
where $u$ is defined implicitly by $z=y^{1+u}$ and 
$$
\delta = 1 - \frac{1+ \log\log 2}{\log 2}= 0.086071\ldots \, .
$$
Furthermore,   let $1 \le y \le x^{1/2},$ and $y + (\log y)^{2/3} \le z \le x.$ The following estimate holds
$$
P(x,y,z;\lambda) \ll_{\lambda} \frac{H(x,y,z)}{\log x}. \qquad \qed
$$
\end{lemma}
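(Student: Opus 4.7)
My plan is to handle the two claims of Lemma \ref{lemma:KFord} in sequence, acknowledging upfront that this is the central result of Ford's paper \cite{KFord} on the Erd\H{o}s divisor problem and that the appearance of the sharp exponent $\delta = 1 - (1+\log\log 2)/\log 2$ is the heart of the matter.

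First I would attack the asymptotic $H(x,y,z) \asymp x u^\delta (\log(2/u))^{-3/2}$. The framework is that, for a typical $n \le x$, having a divisor in $(y,z)$ is equivalent to some subset of the prime factors $p_1,\ldots,p_k$ of $n$ producing a product in $(y,z)$, i.e.\ some subset sum of the $\log p_i$ lying in $(\log y,(1+u)\log y)$. I would then model the multiset $\{\log p_i\}$ probabilistically, giving the lower bound by an explicit construction of integers with prime factors in calibrated dyadic ranges (counted via Mertens' theorem) so that one designated subset product lies in $(y,z)$. The upper bound is the delicate part: one must rule out the overcounting caused by integers possessing many divisors in $(y,z)$, which is precisely what forces $\delta$ into the exponent. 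Following Ford, I would iteratively prune integers with anomalously many small prime factors, analyze the concentration of subset sums of $\log p_i$ via a Gaussian-type estimate that accounts for the $(\log(2/u))^{-3/2}$ correction, and extract $\delta$ from an entropy equation governing how many essentially distinct subset products can arise from $k \approx (\log \log x)/\log 2$ prime factors.

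Next, for the inequality $P(x,y,z;\lambda) \ll_\lambda H(x,y,z)/\log x$, I would transfer from integers to shifted primes by writing
$$
P(x,y,z;\lambda) \le \sum_{\substack{d \in (y,z) \\ \gcd(d,\lambda)=1}} \#\{ p \le x : p \equiv -\lambda \pmod d \},
$$
pruned by inclusion--exclusion to avoid double-counting primes with several divisors in $(y,z)$. The Bombieri--Vinogradov theorem supplies the main term $\mathrm{Li}(x)/\varphi(d)$ uniformly for $d \le x^{1/2-\eps}$; Brun--Titchmarsh handles slightly larger $d$; and for $d$ close to $x$ one swaps to the complementary divisor of $p+\lambda$. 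Summing over $d$ and then reusing the combinatorial analysis developed for $H$ (now applied to primes weighted by their multiplicity of divisors in $(y,z)$) yields the clean upper bound $H(x,y,z)/\log x$ rather than the naive bound that would be larger by the average number of divisors of $p+\lambda$ in $(y,z)$.

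The main obstacle is obtaining the sharp exponent $\delta$ in the first part. The sieving transfer to shifted primes and the lower bound construction for $H$ follow fairly standard lines; by contrast, the upper bound for $H$ demands the precise entropy computation for subset sums of logarithms of primes, which is the content of Ford's breakthrough and would occupy essentially all of any honest proof.
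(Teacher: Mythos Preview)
The paper does not prove this lemma at all: immediately before the statement it says ``The following result appears as Theorem~6 in~\cite{KFord}'', and the trailing $\qed$ inside the display is just marking the end of the quoted statement. So there is nothing to compare your argument against --- the authors are invoking Ford's theorem as a black box.

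Your proposal is a reasonable high-level sketch of the ideas in \cite{KFord} (subset-sum/entropy analysis for the upper bound on $H$, a constructive lower bound, and a sieve transfer for $P(x,y,z;\lambda)$), and you rightly flag that the sharp exponent $\delta$ is the hard part. But as a proof of the lemma in this paper it is both unnecessary and far from complete: the upper bound for $H$ alone is the bulk of Ford's long paper and cannot be reduced to a paragraph, and your treatment of $P(x,y,z;\lambda)$ glosses over the main difficulty (the inclusion--exclusion to handle overcounting is exactly where one needs the same fine structural information about divisors in $(y,z)$ that drives the $H$ estimate, so Bombieri--Vinogradov plus Brun--Titchmarsh alone do not suffice). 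For the purposes of this paper, the correct ``proof'' is simply the citation.
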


We shall only need Lemma \ref{lemma:KFord} for $\lambda\in \{\pm 1\}$.

Now we shall introduce a special set of primes which is important for our arguments. We let
\begin{equation}
\label{eq:zet}
{\mathcal Z}=\{p: \Psi_U(x){\text{\rm ~has~exactly~one~root~modulo~}p}\}.
\end{equation}
If $p$ is sufficiently large and is in ${\mathcal Z}$, then its Frobenius, regarded as an element of $G,$ is in the conjugacy class of the transpositions $\{(12),(23),(13)\}$ when $G=S_3$ and is 
the only nonidentical element of $G$ (which is a transposition of $S_3$) when $\Psi_U(X)$ has an integer root $a$ and ${\mathbb K}$ is quadratic. So, in either case, the Frobenius of such a $p$ is in a conjugacy class 
of index $2$ in $G$. 
It is now an immediate consequence of the Chebotarev Density Theorem, cf. 
\cite{SL}, that ${\mathcal Z}$ contains asymptotically half of the primes, that is, 
$$
\#{\mathcal Z}(x)=\left(1+o(1)\right)\frac{x}{2\log x}\quad {\text{\rm as}}\quad x\to\infty.
$$
For lack of a better notation, we write $\alpha$ for the unique root of $\Psi_U(X)$ modulo $p$ and put $\beta$ and $\gamma$ for the remaining two roots of $\Psi_U(X)$. In case $\Psi_U(X)$ has an integer root $a$, then certainly 
$\alpha=a$. Modulo $p$, we have  
$$
\alpha^p\equiv \alpha\pmod p,\quad \beta^p\equiv \gamma\pmod p,\quad \gamma^p\equiv \beta\pmod p.
$$
Thus, if $n=pm$, where $p\in {\mathcal Z}$, then using \eqref{eq:Binet}, we get on putting
$$V_m=c_{\alpha} \alpha^m+c_{\beta} \gamma^m+c_{\gamma} \beta^m,\qquad {\text{\rm that}}$$
\begin{equation}
\label{eq:reduce}
U_n\equiv c_{\alpha} (\alpha^p)^m+c_{\beta} (\beta^p)^m+c_{\gamma}(\gamma^p)^m\equiv V_m\pmod p.
\end{equation}
Note that ${\bf V}:=\{V_m\}_{m\ge 0}$ is a linearly recurrent sequence satisfying the same recurrence relation as ${\bf U}$ but it is defined only modulo $p$. The above formula is the analogue of Lemma 2.5 in \cite{AL}. Next we need to understand the periods of 
${\bf U}$ and ${\bf V}$ modulo $p$ for $p\in {\mathcal Z}$. 
\begin{defi}
The period $t(p)$ is the smallest positive integer $k$ such that $U_n\equiv U_{n+k}\pmod p$ for all $n\ge 0$ (or, 
$V_m\equiv V_{m+k}\pmod p$ for all $m\ge 0$, respectively). Let $k:=k(p)$ be the minimal positive
integer such that all three congruences
\begin{equation}
\label{eq:orders}
\alpha^k\equiv1 \pmod p,\quad \beta^k\equiv 1\pmod p,\quad \gamma^k\equiv 1\pmod p
\end{equation}
hold. 
\end{defi}
\noindent Note that $k(p)$ is a period of  ${\bf U}$ and ${\bf V}$. Hence, $t(p)$ divides $k(p)$. For large $p$, we 
have in fact that $t(p)=k(p)$. Since we need a precise form of this statement including a precise way to quantify ``all sufficiently large $p$" for the case where we only work with the 
subsequence $\{U_{c+dn}\}_{n\ge 0}$ of ${\bf U}$, we record such a statement below.
\begin{lemma}
\label{lem:periodd}
For each positive integer $d$ and uniformly in $c\in \{0,1,\ldots,d-1\}$, the number of primes $p$ such 
that the period of $\{U_{c+dn}\}_{n\ge 0}$ modulo $p$ is not the smallest positive integer $k:=k(p,d)$ with 
\begin{equation}
\label{eq:congruences}
\alpha^{dk}\equiv 1\pmod p,\quad \beta^{dk}\equiv 1\pmod p\quad {\text{ and}}\quad \gamma^{dk}\equiv 1\pmod p
\end{equation}
is $O(d/\log d)$, where the implied constant depends at most on ${\bf U}$. 
We have $k(p,d)=k(p)/{\text{\rm gcd}}(k(p),d)$.
\end{lemma}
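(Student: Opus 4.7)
The plan is to analyze the minimal period of the subsequence $\{U_{c+dn}\}_{n\ge 0}$ modulo $p$ via the Binet expansion \eqref{eq:Binet}, rewritten as
$$U_{c+dn}=c_{\alpha}\alpha^c(\alpha^d)^n+c_{\beta}\beta^c(\beta^d)^n+c_{\gamma}\gamma^c(\gamma^d)^n,$$
and then to identify an explicit exceptional set of primes of size $O(d/\log d)$ outside which this period equals $k(p,d)$. The algebraic identity $k(p,d)=k(p)/\gcd(k(p),d)$ falls out immediately from \eqref{eq:orders}: since $k(p)$ is the least common multiple of the multiplicative orders of $\alpha,\beta,\gamma$ modulo $p$, the condition $k(p)\mid dk$ is equivalent to $k(p)/\gcd(k(p),d)\mid k$.

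That $k(p,d)$ is always a period of $\{U_{c+dn}\}_{n\ge 0}$ modulo $p$ follows by plugging \eqref{eq:congruences} into the Binet expansion; hence the true period $t$ divides $k(p,d)$. To force equality outside a small exceptional set, I would suppose $t<k(p,d)$. Then
$$c_{\alpha}\alpha^c(\alpha^{dt}-1)(\alpha^d)^n+c_{\beta}\beta^c(\beta^{dt}-1)(\beta^d)^n+c_{\gamma}\gamma^c(\gamma^{dt}-1)(\gamma^d)^n\equiv 0\pmod{\pi}$$
for every $n\ge 0$ and every prime ideal $\pi$ of $\mathcal{O}_{\K}$ above $p$. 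Specialising to $n=0,1,2$ gives a Vandermonde system whose determinant is $\pm(\alpha^d-\beta^d)(\alpha^d-\gamma^d)(\beta^d-\gamma^d)$ modulo $\pi$; when this residue is a unit and $c_{\alpha},c_{\beta},c_{\gamma},\alpha,\beta,\gamma$ are units modulo $\pi$, each of the three coefficients $c_{\lambda}\lambda^c(\lambda^{dt}-1)$ must vanish modulo $\pi$, forcing $\alpha^{dt}\equiv\beta^{dt}\equiv\gamma^{dt}\equiv 1\pmod\pi$. Galois-conjugating propagates these congruences to every prime above $p$, and so to $p$ itself once $p$ is unramified, contradicting the minimality of $k(p,d)$.

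It remains to bound the number of exceptional primes. Those dividing $a_3$, a common denominator for the $c_{\lambda}$, the numerator of some $c_{\lambda}$, or the discriminant of $\K$, form a set of size $O(1)$ depending only on $\mathbf{U}$. The remaining exceptional primes divide the rational integer
$$N:=N_{\K/\Q}\bigl((\alpha^d-\beta^d)(\alpha^d-\gamma^d)(\beta^d-\gamma^d)\bigr),$$
which is nonzero by condition (iii) and satisfies $|N|\le C^d$ for some $C=C(\mathbf{U})$ since $|\alpha|,|\beta|,|\gamma|\le\Gamma$. The standard bound $\omega(m)\ll\log m/\log\log m$ then gives $\omega(N)\ll d/\log d$; uniformity in $c$ is automatic since $c$ appears in none of these quantities.

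The main technical subtlety is interpreting the congruences \eqref{eq:congruences} over $\mathcal{O}_{\K}$: they must be read modulo each prime ideal $\pi\mid p$ simultaneously, and conclusions drawn at one such $\pi$ must be transferred to the others via the Galois action. Once this bookkeeping is set up and the $O(1)$ ramified or ``bad'' primes are excluded, the Vandermonde step and the resulting norm estimate together yield the claim.
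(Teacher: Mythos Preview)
Your argument is correct and in fact cleaner than the paper's. Both proofs proceed by writing down the Binet expansion of $U_{c+d(n+t)}-U_{c+dn}$ and observing that the resulting coefficient vector must lie in the kernel of a $3\times 3$ matrix built from powers of $\alpha^d,\beta^d,\gamma^d$. The paper then appeals to the Beukers machinery of Lemma~\ref{lem:1} to locate some pair $1\le r<s\le 6$ with $D(rd,sd)\ne 0$, and bounds the exceptional primes as divisors of $\prod_{r<s}D(rd,sd)^2$. You instead take $n=0,1,2$ directly and note that the resulting Vandermonde determinant $(\alpha^d-\beta^d)(\alpha^d-\gamma^d)(\beta^d-\gamma^d)$ is nonzero by condition~(iii) alone, since $\alpha^d=\beta^d$ would force $\alpha/\beta$ to be a root of unity. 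This sidesteps Beukers' zero-multiplicity result entirely, which is a genuine simplification: the paper's invocation of Lemma~\ref{lem:1} here is overkill, essentially re-proving that $D(d,2d)\ne 0$ via a six-index argument when the Vandermonde factorisation gives it for free.

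For the counting step both approaches are equivalent: the paper bounds $\sum\log p$ by $O(d)$ and applies the Chebyshev-type argument from the end of Lemma~\ref{lem:1}, while you bound $|N|\le C^d$ and apply $\omega(m)\ll\log m/\log\log m$. Your Galois-transfer remark (propagating the congruences from one $\pi\mid p$ to all of them) is correct once you observe that any $\sigma\in G$ permutes $\{\alpha,\beta,\gamma\}$, so the simultaneous vanishing of all three $\lambda^{dt}-1$ modulo $\pi$ is Galois-stable; the paper handles this implicitly by working with the rational integer $D(rd,sd)^2$.
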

\begin{proof} 
We assume $p$ is sufficiently large so that it does not divide the discriminant of $\Psi_U(X)$ and the numbers $c_{\alpha},~c_{\beta}$ and $c_{\gamma}$ are defined and nonzero modulo any prime ideal $\pi$ of ${\mathcal O}_{\mathbb K}$ dividing $p$. 
Clearly, the period is the smallest $k$ such that $U_{c+d(n+k)}\equiv U_{c+dn}\pmod p$ for $n=0,1,\ldots.$ Writing the above congruences down using the Binet formulas we get that
$$
c_{\alpha} \alpha^c(\alpha^{dk}-1) \alpha^{dn}+c_{\beta} \beta^c (\beta^{dh}-1) \beta^{dn}+c_{\gamma} \gamma^c (\gamma^{dk}-1)\gamma^{dn}\equiv 0\pmod \pi.
$$
Hence, the vector $(c_{\alpha}\alpha^c (\alpha^{dk}-1)),c_{\beta}\beta^{c} (\beta^{dk}-1),c_{\gamma}\gamma^{c} (\gamma^{dk}-1))^T$ is orthogonal to $(\alpha^{dn}, \beta^{dn},\gamma^{dn})$ in the finite field ${\mathcal O}_{\mathbb K}/\pi$ of characteristic $p$. 
We need to bound the number of primes $p$ such that that the above vector is not the zero vector. To do
so note that if the above vector is not the zero vector, then taking $n=0,~r,~s$, we get that 
$D(rd,sd)\equiv 0\pmod \pi$. By the argument from Lemma \ref{lem:1}, there exist $1\le r<s\le 6$ such that $D(rd,sd)\ne 0$. 
Hence, using Lemma \ref{lem:Beu} (due to Beukers), we find that $p\mid D(rd,sd)^2$. So, the primes $p$ for which  one of the congruences \eqref{eq:congruences} fails 
must divide the nonzero integer
$$
\prod_{\substack{1\le r<s\le 6\\ D(rd,sd)\ne 0}} D(rd,sd)^2
$$
and therefore their product also divides the above nonzero integer. The size of the above integer is at most $\Gamma^{O(d)}$. Hence, the number of such primes is $O(d/\log d)$ by the argument from the conclusion of 
the proof of Lemma \ref{lem:1}. 

The final assertion is obvious.
\end{proof}

 Let ${\text{\rm ord}}_p(\bullet)$ denote the order of $p$ function defined either on ${\mathbb Z}/p{\mathbb Z}$ or on some finite extension of it.

\begin{lemma}
\label{lem:5}
Assume that $p\in {\mathcal Z}$ is sufficiently large. Then
\begin{itemize}
\item[(i)] ${\text{\rm ord}}_p(\alpha)\mid p-1$ and ${\text{\rm ord}}_p(\beta/\gamma)\mid p+1$.
\item[(ii)] Let $k(p)$ and $t(p)$ be as in Definition \ref{eq:orders} and $t(p)=k(p)$, then 
\begin{equation}
\label{eq:divisibilities}
{\text{\rm ord}}_p(\alpha){\text{\rm ord}}_p(\beta/\gamma)\mid 2t(p)\mid 8 {\text{\rm ord}}_p(\alpha){\text{\rm ord}}_p(\beta/\gamma)\mid 8(p-1)(p+1).
\end{equation}
\end{itemize}
\end{lemma}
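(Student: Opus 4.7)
The plan is to derive (i) directly from Fermat's little theorem and the Frobenius action on the roots, and then prove (ii) by analyzing how the orders of $\alpha,\beta,\gamma,\beta\gamma$ and $\beta/\gamma$ interact inside the residue field $({\mathcal O}_{\mathbb K}/\pi)^*\cong {\mathbb F}_{p^2}^*$. For (i), since $p\in {\mathcal Z}$ is large enough, $\alpha$ lies in ${\mathbb F}_p^*$, so Fermat gives ${\text{\rm ord}}_p(\alpha)\mid p-1$. For the second divisibility, recall the congruences $\beta^p\equiv \gamma$ and $\gamma^p\equiv \beta\pmod \pi$ displayed just before \eqref{eq:reduce}. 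These imply $(\beta/\gamma)^p\equiv \gamma/\beta=(\beta/\gamma)^{-1}$, hence $(\beta/\gamma)^{p+1}\equiv 1\pmod \pi$, giving ${\text{\rm ord}}_p(\beta/\gamma)\mid p+1$.

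For (ii), introduce the shorthand $A:={\text{\rm ord}}_p(\alpha)$, $B:={\text{\rm ord}}_p(\beta)$ and $C:={\text{\rm ord}}_p(\beta/\gamma)$. Since $\gamma\equiv \beta^p$ and $\gcd(B,p)=1$, we also have ${\text{\rm ord}}_p(\gamma)=B$, so by the definition of $k(p)$ we find $t(p)=k(p)={\text{\rm lcm}}(A,B)$. The first divisibility $AC\mid 2t(p)$ splits into two observations. By (i), $\gcd(A,C)\mid \gcd(p-1,p+1)=2$, hence $AC=\gcd(A,C)\cdot {\text{\rm lcm}}(A,C)\mid 2\,{\text{\rm lcm}}(A,C)$. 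Moreover $(\beta/\gamma)^B=\beta^B/\gamma^B=1$ shows $C\mid B$, so ${\text{\rm lcm}}(A,C)\mid {\text{\rm lcm}}(A,B)=t(p)$, and combining gives $AC\mid 2t(p)$.

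For $2t(p)\mid 8AC$ it suffices to prove $B\mid 4AC$, since together with $A\mid t(p)$ that forces ${\text{\rm lcm}}(A,B)\mid 4AC$. Here I exploit the identity $\beta^2=(\beta\gamma)(\beta/\gamma)$. By hypothesis (ii) on $\Psi_U$, either $\Psi_U$ is irreducible with $\alpha\beta\gamma=a_3=\pm 1$, giving $\beta\gamma=\pm\alpha^{-1}$, or $\Psi_U$ is reducible with $\beta\gamma=c=\pm 1$; in either case $E:={\text{\rm ord}}_p(\beta\gamma)\mid 2A$. Since the order of a product divides the lcm of the orders, ${\text{\rm ord}}_p(\beta^2)\mid {\text{\rm lcm}}(E,C)\mid {\text{\rm lcm}}(2A,C)\mid 2AC$, and therefore $B\mid 2\,{\text{\rm ord}}_p(\beta^2)\mid 4AC$ as required. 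The final chain link $8AC\mid 8(p-1)(p+1)$ is immediate from (i).

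The main technical subtlety is bookkeeping the powers of $2$: the slack of a factor of $2$ enters three times (through $\gcd(A,C)\mid 2$, through the passage from ${\text{\rm ord}}_p(\beta^2)$ to $B$, and through $E\mid 2A$ rather than $E\mid A$), and these conspire to give precisely the constants $2$ and $8$ appearing in \eqref{eq:divisibilities}.
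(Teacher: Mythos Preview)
Your proof is correct and follows essentially the same route as the paper: both arguments use the Frobenius relations $\beta^p\equiv\gamma$, $\gamma^p\equiv\beta$ for part (i), and for part (ii) both exploit hypothesis (ii) on $\Psi_U$ to bound ${\text{\rm ord}}_p(\beta\gamma)$ in terms of ${\text{\rm ord}}_p(\alpha)$ together with $\gcd(p-1,p+1)=2$, the only difference being that the paper organizes the bookkeeping around $L={\text{\rm lcm}}(A,C)$ (showing $L\mid t(p)\mid 4L$ and $AC/L\in\{1,2\}$) while you work directly with $B={\text{\rm ord}}_p(\beta)$ via the identity $\beta^2=(\beta\gamma)(\beta/\gamma)$. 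One small slip: in the sentence ``since together with $A\mid t(p)$ that forces ${\text{\rm lcm}}(A,B)\mid 4AC$'' you presumably meant the trivial $A\mid 4AC$ (not $A\mid t(p)$), which is what is actually needed to conclude ${\text{\rm lcm}}(A,B)\mid 4AC$ from $B\mid 4AC$.
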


\begin{proof}
(i) We will deal only with the case $\alpha \beta\gamma=1$, as the argument in case $\alpha \beta\gamma=-1$ is similar. 
Since $\alpha^p\equiv \alpha\pmod p$, we have that ${\text{\rm ord}}_p(\alpha)\mid p-1$. Since $\beta^p\equiv \gamma\pmod p$, it follows that $\beta^{p+1}\equiv \beta\gamma\equiv \alpha^{-1}\pmod p$ and the same
conclusion is reached with $\beta$ replaced by $\gamma$. Thus, $\beta^{p+1}\equiv \gamma^{p+1}\pmod p$,  or $(\beta/\gamma)^{p+1}\equiv 1\pmod p$. Thus, ${\text{\rm ord}}_p(\beta/\gamma)\mid p+1$. This finishes (i).

\medskip

(ii) Let $L={\text{\rm lcm}}[{\text{\rm ord}}_p(\alpha), {\text{\rm ord}}_p(\beta/\gamma)]$. Since 
by assumption $t(p)=k(p)$, it follows that $\alpha^L\equiv 1\pmod p$ and $(\beta/\gamma)^L\equiv 1\pmod p$. 
In particular, $t(p)$ is a multiple of $L$. Now assume that $\alpha^L\equiv 1\pmod p$ and $(\beta/\gamma)^L\equiv 1\pmod p$. Thus, $\beta^L\equiv \gamma^L\pmod p$. If $\alpha\beta\gamma=\pm 1$, then 
$$
1\equiv (\alpha \beta\gamma)^{2L}\pmod p\equiv \beta^{4L}\pmod p.
$$
If $\beta\gamma=\pm 1$, then 
$$
1=(\beta\gamma)^{2L}\pmod p=\beta^{4L}\pmod p.
$$
Hence, in either case $\beta^{4L}=1$. Similarly, we have $\gamma^{4L}\equiv 1\pmod p$ in these two cases.
This shows that $t(p)\mid 4L$. We now only need to understand the relation between $L$ and the product of ${\text{\rm ord}}_p(\alpha)$ and ${\text{\rm ord}}_p(\beta/\gamma)$. By (i), we have that ${\text{\rm ord}}_p(\alpha)\mid p-1$ and  ${\text{\rm ord}}_p(\beta/\gamma)\mid p+1$. Since $\gcd(p-1,p+1)=2$, we get that $L={\text{\rm ord}}_p(\alpha){\text{\rm ord}}_p(\beta/\gamma)/D$, where $D\in \{1,2\}$. Now (ii) is clear. 
\end{proof}

In view of the above results, we introduce two other sets of primes which are similar to the sets ${\mathcal P}_{y,U}$ defined before Lemma 2.1 in \cite{AL}. Namely, for a real number $y\ge 2$ let
\begin{eqnarray*}
{\mathcal K}_y & = & \{p: p\mid N_{\mathbb K/\mathbb Q}(\alpha^k-1)~{\text{\rm for~some}}~k\le y\},\\
{\mathcal L}_y & = & \{p: p\mid N_{\mathbb K/\mathbb Q}((\beta/\gamma)^k-1)~{\text{\rm for~some}}~k\le y\},
\end{eqnarray*}
where $N$ denotes the norm.
The following result can be proved in the same way as Lemma 2.1 in \cite{AL}. 
Suppose $\Psi_U$ satisfies condition (ii) of Theorem \ref{thm:1} and has an integer root $a$. The reason that
we need to ensure that $a\ne \pm 1$
is related to the proof of the result below (especially the estimate for $\#{\mathcal K}_y$), since of course if $a=\pm 1$ and $y\ge 2$, then ${\mathcal K}_y$ contains all the primes.
\begin{lemma}
\label{lem:QR}
We have 
$$
{\rm max}\{\#{\mathcal K}_y,\#{\mathcal L}_y\}\ll \frac{y^2}{\log y}.
$$
\end{lemma}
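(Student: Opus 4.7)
The plan is to adapt the template of Lemma \ref{lem:1} essentially verbatim. For each $p \in \mathcal{K}_y$ pick some $k=k(p)$ with $1 \le k \le y$ and $p \mid N_{\mathbb{K}/\mathbb{Q}}(\alpha^{k}-1)$; then
$$
\prod_{p \in \mathcal{K}_y} p \mid \prod_{k=1}^{\lfloor y \rfloor} N_{\mathbb{K}/\mathbb{Q}}(\alpha^{k}-1),
$$
provided each factor on the right is a nonzero rational integer. Granting this, bound each factor by $(\Gamma^{k}+1)^{d} \ll \Gamma^{dk}$ with $d=[\mathbb{K}:\mathbb{Q}]\le 6$; the whole right-hand product is then at most $\Gamma^{O(y^{2})}$, so $\sum_{p\in\mathcal{K}_y}\log p \ll y^{2}$, and the Chebyshev step that ends the proof of Lemma \ref{lem:1} (comparing with $\sum_{p\le p_t}\log p\gg t\log t$ for $t=\#\mathcal{K}_y$) yields $\#\mathcal{K}_y \ll y^{2}/\log y$.

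The real work is in verifying that the norms are nonzero integers, and this is precisely where conditions (ii) and (iii) enter. In the irreducible case of (ii) one has $\alpha\beta\gamma=a_{3}=\pm 1$, so $\alpha$ is an algebraic unit; condition (iii) prevents $\alpha$ from being a root of unity, since otherwise all its conjugates and hence every ratio $\alpha/\beta$ would be roots of unity too. In the reducible case $\alpha=a\in\mathbb{Z}\setminus\{0,\pm 1\}$ by (ii), so $a^{k}-1$ is a nonzero integer; the exclusion $a\neq\pm 1$ is exactly the hypothesis that rules out the trivial degeneracy flagged just before the statement, in which $\mathcal{K}_y$ would contain every prime. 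In either situation $\alpha^{k}-1 \in \mathcal{O}_{\mathbb{K}}\setminus\{0\}$, so its norm is a nonzero rational integer.

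For $\mathcal{L}_y$ the same argument runs with $\beta/\gamma$ in place of $\alpha$. Condition (ii) makes $\beta$ and $\gamma$ algebraic units in both branches (either via $\alpha\beta\gamma=\pm 1$ together with Galois conjugation of units, or directly from $\beta\gamma=c=\pm 1$), so $\beta/\gamma$ is a unit of $\mathcal{O}_{\mathbb{K}}$; condition (iii) guarantees $\beta/\gamma$ is not a root of unity, so $(\beta/\gamma)^{k}-1\in\mathcal{O}_{\mathbb{K}}\setminus\{0\}$. The bound $|N_{\mathbb{K}/\mathbb{Q}}((\beta/\gamma)^{k}-1)|\ll C^{dk}$ for a suitable constant $C$ depending only on the roots is immediate from embedding $\mathbb{K}$ into $\mathbb{C}$, and the remainder of the counting step is identical. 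The principal (and really only) obstacle is this bookkeeping that conditions (ii) and (iii) together yield nonvanishing and integrality of the norms; once that is granted, the analytic input is entirely inherited from \cite{AL} and Lemma \ref{lem:1}.
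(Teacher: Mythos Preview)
Your proposal is correct and follows exactly the approach the paper intends: the paper does not write out a proof but simply states that the result ``can be proved in the same way as Lemma 2.1 in \cite{AL},'' and immediately flags (as you do) that the exclusion $a\neq\pm 1$ in condition (ii) is precisely what keeps $\mathcal{K}_y$ from containing every prime. Your argument---taking the product of the nonzero norms $N_{\mathbb{K}/\mathbb{Q}}(\alpha^{k}-1)$ (respectively $N_{\mathbb{K}/\mathbb{Q}}((\beta/\gamma)^{k}-1)$) over $k\le y$, bounding it by $\Gamma^{O(y^{2})}$, and finishing with the Chebyshev step from Lemma \ref{lem:1}---is the intended one, and your verification via conditions (ii) and (iii) that $\alpha$ and $\beta/\gamma$ lie in $\mathcal{O}_{\mathbb K}$ and are not roots of unity is exactly the bookkeeping that makes the norms nonzero integers.
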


Recall that a ``multiplier" modulo $p$ is a residue class $\lambda$ modulo $p$ such that for some $n$ we have $\alpha^n\equiv \beta^n \equiv \gamma^n\equiv \lambda\pmod p$. The multipliers form a multiplicative group in ${\mathbb Z}/p{\mathbb Z}$. 
In our case, recall that either $\Psi_U(X)$ is irreducible over ${\mathbb Q}$, in which case $a_3=\pm 1$, or $\Psi_U(X)$ has a root $a$ and the product of the other two roots is $\pm 1$. 
Hence, either $\alpha\beta\gamma=\pm 1$, or $\beta\gamma=\pm 1$. We thus get that any multiplier $\lambda$ satisfies $\lambda^3\equiv (\alpha\beta\gamma)^n\equiv \pm 1\pmod p$ in the first case, and $\lambda^2\equiv (\beta\gamma)^n\equiv \pm 1\pmod p$ in the second case, so the group of multipliers has at most $6$ elements. 

Given an arithmetic progression $c\pmod d$, we 
denote by $t_{c,d,p}$ the period of the sequence $(V_{c+dn})_{n\ge 0}$ modulo $p$. By Lemma \ref{lem:periodd} we have the equality $t_{c,d,p}=t(p)/{\text{\rm gcd}}(d,t(p))$, 
except for a set of primes $p$ of cardinality $O(d/\log d)$,  
 We record this as the first part of the next lemma. The second part of it follows from the bound on in \cite[p. 86]{ESPW} and is based on results from \cite{Sh}.
\begin{lemma}
\label{lem:period}
Assume that $c\ge 0,~d>0$ are integers. Then 
\begin{itemize}
\item[(i)] $t_{c,d,p}=t(p)/{\text{\rm gcd}}(d,t(p))$ with $O(d/\log d)$ exceptions. 
\item[(ii)] We have 
$$
\sum_{k=1}^{t_{c,d,p}} \left(\frac{V_{c+dk}}{p}\right)\ll p.
$$
\end{itemize}
\end{lemma}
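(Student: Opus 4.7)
The plan is to derive (i) by applying Lemma \ref{lem:periodd} to the sequence $(V_m)$ in place of $(U_n)$. By \eqref{eq:reduce} the Binet formula of $V$ is $V_m = c_\alpha \alpha^m + c_\beta \gamma^m + c_\gamma \beta^m$, a linear combination of powers of the same three roots $\alpha, \beta, \gamma$, with only the coefficients of $\gamma^m$ and $\beta^m$ reassigned. The argument in the proof of Lemma \ref{lem:periodd} used only two facts about the Binet formula: that the sequence is a linear combination of powers of $\alpha,\beta,\gamma$, and that the three coefficients are nonzero modulo any prime ideal $\pi \mid p$ for $p$ sufficiently large. Both properties are preserved under this swap, so the proof transfers verbatim: the period of $(V_{c+dn})_{n \ge 0}$ modulo $p$ equals $k(p)/\gcd(k(p), d)$ outside an exceptional set of $O(d/\log d)$ primes. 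Combining this with $t(p) = k(p)$ for all sufficiently large $p$ and with the identity $k(p,d) = k(p)/\gcd(k(p), d)$ already recorded in Lemma \ref{lem:periodd}, one recovers $t_{c,d,p} = t(p)/\gcd(d, t(p))$ as claimed.

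For (ii), I would first observe that $(V_{c+dk})_{k \ge 0}$ is itself a linearly recurrent sequence modulo $p$ of order at most $3$, with characteristic roots $\alpha^d, \beta^d, \gamma^d$ reduced mod $p$, and period equal to $t_{c,d,p}$ by (i). I would then invoke Shparlinski's bound for the quadratic character sum over a full period of a linearly recurrent sequence modulo $p$, as transcribed on \cite[p.~86]{ESPW} and going back to \cite{Sh}, which yields
$$
\left| \sum_{k=1}^{t_{c,d,p}} \left( \frac{V_{c+dk}}{p} \right) \right| \ll p.
$$

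The only real obstacle is verifying the nondegeneracy hypotheses needed to invoke Shparlinski's theorem for the $d$-subsampled sequence. Specifically, one has to exclude primes $p$ for which two of $\alpha^d, \beta^d, \gamma^d$ coincide modulo $p$, for which a pairwise ratio is a root of unity in the relevant finite extension of $\mathbb{F}_p$, or for which one of the Binet coefficients of $V$ vanishes mod $p$. Condition (iii) together with Lemma \ref{lem:QR} (applied to appropriate powers of $\alpha$ and $\beta/\gamma$) shows that at most $O(d^2/\log d)$ primes violate any of these conditions, and such primes can be absorbed into the exceptional set already allowed in part (i). Once this verification is carried out, both assertions reduce to direct citations of Lemma \ref{lem:periodd} and of Shparlinski's bound.
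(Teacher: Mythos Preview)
Your argument for (i) is exactly the paper's: apply Lemma~\ref{lem:periodd} to the sequence $V$ (whose Binet coefficients are those of $U$ with the $\beta$- and $\gamma$-coefficients swapped), and combine with $t(p)=k(p)$ for large $p$. For (ii) you likewise cite the Shparlinski bound via \cite[p.~86]{ESPW}, which is precisely what the paper does.

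The only discrepancy is your final paragraph. You attempt to verify nondegeneracy of the $d$-subsampled sequence by excluding $O(d^2/\log d)$ primes and absorbing them into the exceptional set of part~(i). This does not work as written: part~(i) allows only $O(d/\log d)$ exceptions, and part~(ii) is stated with no exceptions at all. The paper sidesteps this entirely. Rather than checking nondegeneracy of $(V_{c+dk})_k$ for each $d$, the paper invokes the form of the bound in \cite[p.~86]{ESPW} in which the implied constant is the size of the group of multipliers of the \emph{base} sequence $\{V_m\}_{m\ge 0}$. Since any multiplier $\lambda$ satisfies $\lambda^3\equiv(\alpha\beta\gamma)^n\equiv\pm 1\pmod p$ (or $\lambda^2\equiv(\beta\gamma)^n\equiv\pm 1\pmod p$ in the reducible case), this group has at most $6$ elements, and the bound $\le 6p$ follows unconditionally for every $p$, $c$, $d$. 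So your worry about nondegeneracy is unnecessary once the result is phrased in terms of the multiplier group, and the attempted absorption step should be dropped.
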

In fact the result given in \cite[p. 86]{ESPW} together with our remark that the group of multipliers for $\{V_m\}_{m\ge 0}$ has at most $6$ elements shows that the implied constant in the above Vinogradov symbol $\ll$ can be taken to be $6$.

\section{The proof of Theorem \ref{thm:1}}

We begin by discarding several subsets of integers $n\in [1,x)$ which 
on removal make our problem easier to deal with.
We 
proceed along the lines of \cite{AL} with the same choice of parameters
so we will only make the arguments explicit in case there are new ideas involved.

Recall the definition \eqref{muuuh} of ${\mathcal M}_U$.
To simplify notation we omit the subscript $U$ on ${\mathcal M}_U$ and 
just write ${\mathcal M}$. We write ${\mathcal M}_1,~{\mathcal M}_2$ and so on for subsets of ${\mathcal M}$.  
Let $x$ be a large positive real number. Put $y_1 = \exp(\log x /\log\log  x)$. 
Let
$$
{\mathcal M}_1(x)=\{n\le x:P(n)\le y_1\}.
$$
By Lemma \ref{lem:2}, we have
\begin{equation}
\label{eq:N1bound}
\#{\mathcal M}_1(x)=\Psi(x,y_1) \ll x\exp(-u/2)=\frac{x}{(\log x)^{1/2}}.
\end{equation}
Here, $u=\log x/\log y_1$. Next let $z_1=(\log x)^3$. Let $\kappa\in (0,1)$ to be fixed later. Put
$$
\cM_2(x)=\{n\le x: p^2\mid n~{\text{\rm for~some~prime}}~p>z_1^{\kappa}\}.
$$
Note that
\begin{equation}
\label{eq:N2bound}
\#{\mathcal M}_2(x)\le \sum_{z_1^{\kappa}\le p\le x^{1/2}} \frac{x}{p^2}\le x\sum_{m\ge z_1^{\kappa}} \frac{1}{m^2}\ll \frac{x}{z_1^{\kappa}}=\frac{x}{(\log x)^{3\kappa}}.
\end{equation}
Next we let 
$$
\cP = {\mathcal Z}\cap {\mathcal P}_{p^{1/4},U},
$$ 
where the set ${\mathcal Z}$ is defined in 
\eqref{eq:zet} and ${\mathcal P}_{p^{1/4},U}$ in \eqref{eq:PyU}. 
By Lemma \ref{lem:1}, we know that 
$$
\#\cP(x) \le \#{\mathcal P}_{x^{1/4},U}\ll \frac{x^{3/4}}{\log x}.
$$
Put
$$
\cM_3(x)=\{n\le x: p\mid n~{\text{\rm for~ some}}~p\in {\mathcal P}~{\text{\rm with}}~p\ge z_1\}.
$$
The number of $n\le x$ which are multiples of $p$ is $\lfloor x/p\rfloor\le x/p$. Summing up over all the possibilities for 
$p$, we get, by partial summation,  
\begin{eqnarray}
\label{eq:N3bound}
\#{\mathcal M}_3(x) & \le & \sum_{\substack{z_1\le p\le x\\ p\in \cP}}\frac{x}{p} = x \int_{z_1}^x\frac{\mathrm{d}\#\cP(t)}{t} 
 =x\left(\frac{\#\cP(t)}{t}\Big|_{z_1}^x+ \int_{z_1}^x \frac{\#\cP(t)}{t^2}  {\mathrm{d}}t\right)\nonumber\\
& \ll  & x\left(\frac{\#\cP(x)}{x} + \int_{z_1}^{x}\frac{{\mathrm{d}}t}{t^{5/4}}\right) \ll x\left(\frac{1}{x^{1/4}\log x}+\left(-\frac{4}{t^{1/4}}\Big|_{t=z_1}^{t=x}\right)\right)\nonumber\\
& \ll & \frac{x}{(\log x)^{3/4}}.
\end{eqnarray} 
Next we define the set 
$$
\cM_4(x)=\{n\le x: n\not\in {\mathcal M}_3(x),~ p\mid \gcd(n,U_n)~{\text{\rm for~some}}~p\in {\mathcal Z}~{\text{\rm with}}~p>z_1\}.
$$
If $n\in {\mathcal M}_4(x)$, then $p\mid \gcd(n,U_n)$ for some prime $p\in {\mathcal Z}$ with $p>z_1$. Write $n=pm$. Further we have, $U_n=U_{pm}\equiv 0\pmod p$. 
Since $p\not\in {\mathcal P}_{p^{1/4}, U}$, it follows that each interval of the form $[1+p^{1/4}\ell,p^{1/4}(\ell+1)]$
contains at most $6$ of the $m_i$'s for all integers $\ell\ge 0$. The $m_i$'s for which $\ell= 0$ (so $m_i\le p^{1/4}$) give us a total of at most $6\pi(x)=O(x/\log x)$ possibilities for $n$. The remaining ones give a total of at most
$$
\sum_{\substack{p\in {\mathcal Z}\backslash {\mathcal P}\\ p>z_1}}\frac{6x}{p^{5/4}} \ll x\sum_{p>z_1} \frac{1}{p^{5/4}}\ll \frac{x}{(\log x)^{3/4}}
$$ possibilities.
Hence,
\begin{equation}
\label{eq:N4bound}
\#{\mathcal M}_4(x)\ll \frac{x}{(\log x)^{3/4}}.
\end{equation}
Assume for the moment that $n\le x$ is in ${\mathcal M}(x)\backslash \bigcup_{i=1}^4 {\mathcal M}_i(x)$. Thus 
we can write
\begin{equation}
\label{eq:maineq}
U_n=u^2+nv^2
\end{equation}
for some integers $u$ and $v$ (depending on $n$). For large $x$ we have $y_1>z_1$, so, since $n\not\in {\mathcal M}_3(x)$, there is some prime $p>z_1$ 
such that $p\mid n$. Since $n\not\in {\mathcal M}_2(x)\cup {\mathcal M}_4(x)$, it follows that $p\| n$ and $p\nmid U_n$.  Assume for now that $p\in {\mathcal Z}$. Then writing $n=pm$, we have $\gcd(m,p)=1$ and $U_n\equiv V_m\pmod p$ by \eqref{eq:reduce}.
Reducing equation \eqref{eq:maineq} modulo $p$, we get
$$
U_n\equiv u^2\pmod p
$$
and $p\nmid u$. Thus, 
$$
1=\left(\frac{U_n}{p}\right)=\left(\frac{V_m}{p}\right).
$$
We conclude from this argument that whenever we have a representation of $n$ of the form $n=mp$, with $p>z_1$, then $V_m$ is a quadratic residue modulo $p$. In order to use this information efficiently, we remove some more integers $n\le x$. Let
\begin{eqnarray*}
\cM_5(x)  =  \{n\le x &:& n\not\in {\mathcal M}_2(x)~ {\text{\rm and there is}}~q>z_1^{\kappa},~q\mid \gcd(k(p_1),k(p_2))\nonumber\\
&& {\text{\rm for}}~p_1\ne p_2~{\text{\rm both~in}}~{\mathcal Z}~{\text{\rm with}}~p_1p_2\mid n,\nonumber\\
&& {\text{\rm or}}~q\mid \gcd(n,k(p))~{\text{\rm for~some}}~p\mid n~{\text{\rm with}}~p\in {\mathcal Z}\}.
\end{eqnarray*}
Assume that $n\in {\mathcal M}_5(x)$.
Observe that if $x$ is large, then $q$ is large, so the condition $q \mid k(p)$ together with the condition that $p\in {\mathcal Z}$ implies $q \mid p \pm 1$. Hence, either $q\mid n$ and $q\mid p\pm 1$ for some $p>z_1$ with $p\in {\mathcal Z}$, or 
there are $p_1,~p_2$ both in ${\mathcal Z}$ dividing $n$ such that $q\mid \gcd(p_1\pm 1,p_2\pm 1)$. All this follows from Lemma \ref{lem:5}. In either case, the argument from \cite{AL} applies and gives 
\begin{equation}
\label{eq:N5bound}
\#{\mathcal M}_{5}(x) \ll  \frac{x(\log\log x)^2}{(\log x)^{3\kappa}}.
\end{equation}

For a prime $p$ write 
$k(p)=a_p b_p$, where $P(a_p)\le (\log p)^3$ and $b_p$ has only prime factors larger than $(\log p)^3$. 
Let $z_2=\exp(18(\log\log x)^2)$ and 
put
$$
\#\cM_6(x) = \{n\le x: a_p > z_2~ {\text{\rm for some prime }}~p \mid n\}.
$$
The argument from \cite{AL} applies and gives
\begin{equation}
\label{eq:N6bound}
\#{\mathcal M}_6(x)\ll x\log x\left(\frac{1}{(\log x)^3}+\frac{1}{(\log x)^3} \int_{z_2}^x \frac{{\mathrm{d}}t}{t}\right)\ll \frac{x}{\log x}.
\end{equation}
Next let $z_3=\exp((\log x)^{\kappa})$. We next will discard positive integers $n$ having a prime factor $p>z_3$ for which $k(p)$ is ``small" in a sense that will be made more precise below. Put $c=20\kappa^{-2}$ and
define the following sets of primes 
\begin{eqnarray*}
{\mathcal Q}_1 & = & {\mathcal Z} \cap {\mathcal K}_{p^{1/2}/\log p};\qquad {\mathcal R}_1  =  {\mathcal Z} \cap {\mathcal L}_{p^{1/2}/\log p};\nonumber\\
{\mathcal Q}_2 & = & \left\{p: d\mid p-1~{\text{\rm and}}~ d\in \left[\frac{p^{1/2}}{\log p}<z(p)<p^{1/2} \exp\left(c(\log\log p)^2\right)\right]\right\};\nonumber\\
{\mathcal R}_2 & = & \left\{p: d\mid p+1~{\text{\rm and}}~d\in \left[\frac{p^{1/2}}{\log p}<z(p)<p^{1/2} \exp\left(c(\log\log p)^2\right)\right]\right\}.\nonumber
\end{eqnarray*}
We need estimates for the counting functions of $\#{\mathcal Q}_1(t)$, $\#{\mathcal Q}_2(t)$, $\#{\mathcal R}_1(t)$ and $\#{\mathcal R}_2(t)$. For $\#{\mathcal Q}_1(t)$, we have
\begin{equation}
\label{eq:Q1}
\#{\mathcal Q}_1(t)\le \#{\mathcal Q}_{t^{1/2}/\log t}\ll \frac{t}{(\log t)^3},
\end{equation}
by Lemma \ref{lem:QR} with $y=t^{1/2}/\log t$. A similar inequality holds for $\#{\mathcal R}_1(t)$. For $\#{\mathcal Q}_2(t)$, we first deal with ${\mathcal Q}_2\cap [t/2,t]$. Let $p$ be a prime in ${\mathcal Q}_2\cap [t/2,t]$
and $t$ be large. Then
\begin{eqnarray*}
 \frac{p^{1/2}}{\log p} & > & \frac{t^{1/2}}{2^{1/2}\log(t/2)}>\frac{t^{1/2}}{2\log t};\\
 p^{1/2} \exp\left(c(\log\log p)^2\right) & < &  t^{1/2} \exp\left(c(\log\log t)^2\right).
\end{eqnarray*}
It follows from this that if $p$ is in ${\mathcal Q}_2$, then $p-1$ 
has a divisor in the interval $(y,z)$, where $y=t^{1/2}/(2\log t)$ and $z=t^{1/2}\exp\left(c(\log\log t)^2\right)$. 
The argument from \cite{AL} based on estimates from \cite{KFord} gives
\begin{equation}
\label{eq:Q2}
\#({\mathcal Q}_2\cap [t/2,t])\le \sum_{\lambda\in \{\pm 1\}} P(t,y,z,\lambda)\ll \frac{H(t,y,z)}{\log t}\ll \frac{t}{(\log t)^{1+\delta}},
\end{equation}
with $\delta$ as in Lemma \ref{lemma:KFord}.
Replacing $t$ by $t/2$, then by $t/4$, etc. and summing the above estimates \eqref{eq:Q2}, we get
\begin{equation}
\label{eq:Q2t}
\#{\mathcal Q}_2(t)\ll \frac{t}{(\log t)^{1+\delta}},
\end{equation}
as in \cite{AL}. A similar argument holds with ${\mathcal Q}_2$ replaced by ${\mathcal R}_2$ (just change $p-1$ to $p+1$). 
Comparing estimate \eqref{eq:Q2t} with \eqref{eq:Q1}, we get that if we put ${\mathcal Q}_3={\mathcal Q}_1\cup {\mathcal Q}_2$ and ${\mathcal R}_3={\mathcal R}_1\cup {\mathcal R}_2$, then 
$$
\#{\mathcal Q}_3(t)\le \#{\mathcal Q}_1(t)+\#{\mathcal Q}_2(t)\ll \frac{t}{(\log t)^{1+\delta}},
$$
and a similar estimate holds with ${\mathcal Q}$ replaced by ${\mathcal R}$.
Next we consider
\begin{equation*}
{\mathcal M}_7(x)=\{n\le x:{\text{\rm there~exists}}~p>z_3,~p\in {\mathcal Z},~p\mid n,~p\in {\mathcal Q}_3\cup {\mathcal R}_3\}.
\end{equation*}
The argument from \cite{AL} now applies and gives
\begin{equation}
\label{eq:N7bound}
\#{\mathcal M}_7(x) \ll \frac{x}{(\log x)^{\kappa\delta}}.
\end{equation}
We next fix $\lambda\in (0,(1-\kappa)/2)$ to be determined later, put $K=\lfloor \lambda\log\log x\rfloor$, $y_2=\exp(\log x/(\log\log x)^2)$, ${\mathcal I}=(z_3,y_2)$
and
$$
\omega_{{\mathcal I}\cap {\mathcal Z}}(n)=\sum_{\substack{p\in {\mathcal I}\cap {\mathcal Z}\\ p\mid n}} 1.
$$
Let
$$
{\mathcal M}_8(x)=\{n\le x: n\not\in {\mathcal M}_2(x), \omega_{{\mathcal I}\cap {\mathcal Z}}(n)< K\}.
$$ 
We now follow \cite{AL}. The only difference is in the estimate of the sum
$$
S=\sum_{p\in {\mathcal I}\cap {\mathcal Z}} \frac{1}{p}=\left(\frac{1-\kappa}{2}\right)\log\log x-\log\log\log x+O(1),
$$
which is by a factor of $1/2$ smaller than the analogous sum $S$ in \cite{AL}. The presence of the factor $1/2$ is due to the fact that we only work with primes $p\in {\mathcal Z}$, a subset of 
relative density $1/2$ in the set of all primes. Following \cite{AL} and/or invoking Theorems 08 and 09 in \cite{HT}, we get
\begin{equation}
\label{eq:N8bound}
\#{\mathcal M}_8(x)  \ll  \frac{x(\log\log x)^{O(1)}}{(\log x)^{\mu}},
\end{equation}
where 
$$
\mu=\left(\frac{1-\kappa}{2}\right)-\lambda\log\left(\frac{e(1-\kappa)}{2\lambda}\right).
$$
Let $n\not\in \bigcup_{i=1}^8 {\mathcal M}_i(x)$. Write $n=Pm$, where $P=P(n)$.  Fix $m$. The main idea is that now $n$ has $K$ representations of the form
$n=p_im_i$, where $p_i\in {\mathcal I}\cap {\mathcal Z}$. Say $n=p_i m_i$, where $p_1<p_2<\cdots<p_K$ are the first (smallest) prime factors of $n$ in ${\mathcal I}\cap {\mathcal Z}$ which exist because $n\not\in {\mathcal M}_8(x)$. As in \cite{AL}, cf. the first sentence following (\ref{eq:N5bound}) in the present paper, we write 
$k(p_i)=a_{p_i} b_{p_i}$ for $i=1,\ldots,K$ and conclude that $\gcd(b_{p_i},b_{p_j})=1$ for $i\ne j$ both in $\{1,2,\ldots,K\}$ because $n\not\in {\mathcal M}_5(x)\cup {\mathcal M}_6(x)$. Further, by Lemma \ref{lem:5} and using the fact that the
$p_i$ are all sufficiently large for $x$ sufficiently large, we get that the equalities $t(p_i)=k(p_i)$ hold for all $i=1,\ldots,K$. Hence,
$k(p_i)=\delta(p_i) {\text{\rm ord}}_{p_i}(\alpha) {\text{\rm ord}}_{p_i}(\beta/\gamma)$, where $\delta(p_i)\in \{1/2,1,2\}$. We keep the notations from \cite{AL}, where 
$$
U(m)={\text{\rm lcm}}[t(p_1),\ldots,t(p_K)]\quad {\text{\rm and}}\quad V(m)={\text{\rm lcm}}[a_{p_1},\ldots,a_{p_K}],
$$
and hope that the reader will not confuse these notations with $U_m$, or $V_m$, respectively. 
We then get that $V(m)$ is ``small", namely 
$$
V(m)\le 4\exp(18\lambda(\log\log x)^3).
$$ 
We shall work with $\{V_{c+dn}\}_{n\ge 0}$ where $d=V(m)$ and $0\le c<d$ will be be chosen later. There is a further nuisance here which  was not present in the problem treated in \cite{AL}, in that it might be possible that $t_{c,d,p}\ne t(p)/{\text{\rm gcd}}(t(p),d)$ for one of the primes $p=p_1,\ldots,p_K$ which we are working with. But Lemma \ref{lem:period} tells that, for each fixed $d$, the number of such primes $p$ is at most $O(d)$. Put  
$z_4=\exp(18 (\log\log x)^3)$ and note that $V(m)<z_4$ for all sufficiently large $x$. Put ${\mathcal Q}_4$ for the set of primes $p>z_3$ such that $t_{c,d,p}\ne t(p)/{\text{\rm gcd}}(t(p),d)$ for some pair $(c,d)$ with $d<z_4$ and $c\in \{0,1,\ldots,d-1\}$. Then 
$$
\#{\mathcal Q}_4\ll \sum_{d\le z_4} d \ll z_4^2.
$$
So, letting
$$
{\mathcal M}_{9} (x)=\{n\le x:p\mid n~{\text{\rm for~some}}~p>z_3~{\text{\rm and}}~p\in {\mathcal Q}_4\},
$$
we get that 
\begin{equation}
\label{eq:N10bound}
\#{\mathcal M}_{9}(x)\le \sum_{p\in {\mathcal Q}_4} \frac{x}{p}\ll \frac{x\#{\mathcal Q}_4}{z_3}\ll \frac{x z_4^2}{z_3}\ll \frac{x}{\log x}.
\end{equation}
From now on, we work in ${\mathcal M}_{10}(x)={\mathcal M}(x)\backslash \left(\bigcup_{i=1}^9 {\mathcal M}_i(x)\right)$.  We also fix the residue class $c$ of $P$ modulo $d=V(m)$. We now use the fact that $mP=m_i p_i$ and ${\displaystyle{\left(\frac{V_{m_i}}{p_i}\right)=1}}$. This puts $m_i$ in certain residue classes modulo 
$$
t(p_i)/\gcd(t(p_i),V(m))=b_{p_i}
$$ 
(analogous to formula (3.23) in \cite{AL}), where this last formula holds because $n\not\in {\mathcal M}_{9}(x)$. 
In our case, we have, by Lemma \ref{lem:5}, 
\begin{eqnarray*}
b_{p_i} & \ge &  \frac{{\text{\rm ord}}_{p_i}(\alpha){\text{\rm ord}}_{p_i}(\beta/\gamma)}{8 a_{p_i}}\ge  \frac{(p_i^{1/2} \exp(c(\log\log z_3)^2))^2}{8z_2^2}\\
& > & p_i \exp(2(\log\log x))^2.
\end{eqnarray*}
The above inequality is the analogue of (3.22) in \cite{AL}. Now the current Lemma \ref{lem:period}, together with the argument from \cite{AL}, shows that the analogue of estimate (3.24) from \cite{AL} 
also holds in our situation. Next the argument from \cite{AL} based on the Chinese Remainder Theorem
leads to the conclusion that
\begin{equation}
\label{eq:N9bound}
\#{\mathcal M}_{10}(x)  \ll \frac{x(\log\log x)^2}{(\log x)^{\lambda\log 2}}.
\end{equation}
On comparing the upper bounds \eqref{eq:N1bound}, \eqref{eq:N2bound}, \eqref{eq:N3bound}, \eqref{eq:N4bound}, \eqref{eq:N5bound}, \eqref{eq:N6bound}, \eqref{eq:N7bound}, \eqref{eq:N8bound}, \eqref{eq:N10bound}
 and \eqref{eq:N9bound} we get that
$$
\#{\mathcal M}(x)\ll \frac{x}{(\log x)^{\min\{\kappa\delta, \nu, \lambda\log 2\}}}.
$$
In order to minimize this upper bound we choose $\kappa$ and $\lambda$ in such a way that $\kappa\delta=\nu=\lambda\log 2$. Thus, $\lambda=\kappa \delta/\log 2$, and we get
$$
\kappa \delta=\left(\frac{1-\kappa}{2}\right)-\frac{\kappa \delta}{\log 2} \log\left(\frac{e(1-\kappa)\log 2}{2\kappa \delta}\right).
$$
Solving we get $\kappa=0.600541$ with $\lambda=0.07452\ldots.$ 
Note that $\lambda<(1-\kappa)/2$ as we required at the outset. The final exponent on the logarithm in 
the saving over the trivial 
bound $\#{\mathcal M}(x)\le x$ is $\kappa \delta=0.0516894\ldots,$ which leads to the desired conclusion.

\section*{Acknowledgements.} 

This work was done while F. L. visited the Max Planck Institute for Mathematics in Bonn in April 2015. This author thanks this institution for its hospitality and support.


\begin{thebibliography}{9999}


\bibitem{AL} J. J. Alba Gonz\'alez and F. Luca, ``On positive integers $n$ such that $F_n = x^2 + ny^2$",
{\it Contemporary Mathematics\/}, AMS, Vol. {\bf 587} (2013), 95--109.

\bibitem{ABL} J. J. Alba Gonz\'alez, P. Berrizbeitia and F. Luca, ``On the formula $F_p = u^2 + pv^2$",
{\it Internat. J. Number Theory\/} {\bf 11} (2015), 185--191.

\bibitem{Beu} F. Beukers, ``The zero-multiplicity of ternary recurrences", {\it Compositio Math.\/} {\bf 77} (1991),  165--177. 

\bibitem{CEP} E. R. Canfield, P. Erd\H os and C. Pomerance, ``On a problem of Oppenheim concerning
`Factorisatio Numerorum'", {\it J. Number Theory\/} {\bf 17} (1983), 1--28.

\bibitem{EP} P. Erd\H os and C. Pomerance, ``On the normal number of prime factors of $\phi(n)$", {\it Rocky Mtn.
J. Math.\/} {\bf 15} (1985), 343--352. 

\bibitem{ESPW} G. Everest, A. van der Poorten, I. Shparlinski and T. Ward, {\it Recurrence sequences\/},
American Mathematical Society, Providence, RI, 2003.

\bibitem{KFord} K. Ford, ``The distribution of integers with a divisor in a given interval", {\it Ann. Math.\/} {\bf 168}
(2008), 367--433.

\bibitem{GomezLuca} C. A. G\'omez and F. Luca, ``The zero multiplicity of third-order linear recurrences associated to the
Tribonacci sequence", {\it Indag. Math.\/} {\bf 25} (2014), 579--587.

\bibitem{HT} R. R. Hall and G. Tenenbaum, {\it Divisors\/}, Cambridge University Press, Cambridge, 1988.

\bibitem{Sav} D. Savin, ``Fibonacci primes of special forms", {\it Notes on Number Theory and Discrete Math.} {\bf 20} (2014), 10--19.

\bibitem{Sh} I. E. Shparlinski, ``Distribution of nonresidues and primitive roots in recurrent sequences", {\it Mat.
Zametki \/} {\bf 24} (1978), 603--613.

\bibitem{SL} P. Stevenhagen and H. W. Lenstra,  ``Chebotar\"{e}v and his density theorem", 
{\it Math. Intelligencer} {\bf 18} (1996), 26--37.

\bibitem{Ten} G. Tenenbaum, {\it Introduction to analytic and probabilistic number theory\/}, Cambridge University Press, 1995.

\end{thebibliography}
\end{document}